\newtheorem{thm}{Theorem}
\newtheorem{cor}[thm]{Corollary}
\newtheorem{lem}[thm]{Lemma}
\newtheorem{prop}[thm]{Proposition}
\theoremstyle{definition}
\newtheorem*{theorem*}{Theorem}
\theoremstyle{remark}
\numberwithin{equation}{section}
\newcommand{\norm}[1]{\left\Vert#1\right\Vert}
\newcommand{\To}{\longrightarrow}
\def\<{\langle}
\def\>{\rangle}
\begin{document}
\title[]{Universal composition operators}
\author{Jo\~a{o} R. Carmo and  S. Waleed Noor } 
\address{IMECC, Universidade Estadual de Campinas, Campinas-SP, Brazil.}
\email{$\mathrm{joao.mr2@hotmail.com}$(1st author)$,\mathrm{waleed@unicamp.br}$(2nd author).}

\begin{abstract} A Hilbert space operator $U$ is called \emph{universal} (in the sense of Rota) if every Hilbert space operator is similar to a multiple of $U$ restricted to one of its invariant subspaces. It follows that the \emph{Invariant Subspace Problem} for Hilbert spaces is equivalent to the statement that all minimal invariant subspaces for $U$ are one dimensional. In this article we characterize all linear fractional composition operators $C_{\phi} f=f\circ\phi$ that have universal translates on both the classical Hardy spaces $H^2(\mathbb{C}_+)$ and $H^2(\mathbb{D})$ of the half-plane and the unit disk respectively. The surprising new example is the composition operator on $H^2(\mathbb{D})$ with \emph{affine} symbol $\phi_a(z)=az+(1-a)$ for $0<a<1$. This leads to strong characterizations of minimal invariant subspaces and eigenvectors of $C_{\phi_a}$ and offers an alternative approach to the ISP.
\end{abstract}
\keywords{Universal operator, composition operator, Invariant subspace problem.}
\subjclass[2010]{Primary 47A15, 47B33, 30H10}
\maketitle{}
\section{Introduction} One of the most important open problems in operator theory is the \emph{Invariant Subspace Problem} (ISP), which asks: Given a complex separable Hilbert space $\mathcal{H}$ and a bounded linear operator $T$ on $\mathcal{H}$, does $T$ have a nontrivial invariant subspace? An invariant subspace of $T$ is a closed subspace $E\subset\mathcal{H}$ such that $TE\subset E$. The recent monograph by Chalendar and Partington \cite{Chalendar Partington book} is a reference for some modern approaches to the ISP. Rota \cite{Rota} in 1960 demonstrated the existence of operators that have invariant subspace lattices so rich that they could model \emph{every} Hilbert space operator. \\ \\
$\mathbf{Definition.}$
\emph{Let $\mathcal{B}$ be a Banach space and $U$ a bounded linear operator on $\mathcal{B}$. Then $U$ is said to be universal for $\mathcal{B}$, if for any bounded linear operator $T$ on $\mathcal{B}$ there exists a constant $\alpha\neq 0$ and an invariant subspace $\mathcal{M}$ for $U$ such that the restriction $U|_\mathcal{M}$ is similar to $\alpha T$. } \\ 

If $U$ is universal for a separable, infinite dimensional Hilbert space $\mathcal{H}$, then the ISP is equivalent to the assertion that every infinite dimensional invariant subspace for $U$ contains a nontrivial proper invariant subspace, or equivalently, the \emph{minimal} invariant subspaces for $U$ are all one dimensional. The main tool thus far  for identifying universal operators has been the following criterion of Caradus \cite{Caradus}. \\ \\
$\mathbf{The \ Caradus \  criterion.}$
\emph{	Let $\mathcal{H}$ be a separable infinite dimensional Hilbert space and $U$ a bounded linear operator on $\mathcal{H}$. If $\mathrm{ker}(U)$ is infinite dimensional and $U$ is surjective, then $U$ is universal for $\mathcal{H}$. } \\

If $X$ is a Banach space of holomorphic functions on an open set $U\subset\mathbb{C}$ and if $\phi$ is a holomorphic self-map of $U$, the \emph{composition operator} with \emph{symbol} $\phi$ is defined by $C_\phi f=f\circ\phi$ for any $f\in X$. The study of composition operators is concerned with  the comparison of  properties of $C_\phi$ with those of the symbol $\phi$. If $X$ is the Hardy space $H^2(\mathbb{D})$ of the open unit disk then every self-map $\phi$ of $\mathbb{D}$ induces a bounded $C_\phi$.  In contrast if $X$ is the Hardy space $H^2(\mathbb{C}_+)$ of the right half-plane, then a holomorphic self-map $\psi$ of $\mathbb{C}_+$ induces a bounded $C_\psi$ if and only if $\psi$ has a finite angular derivative at the fixed point $\infty$. That is, if $\psi(\infty)=\infty$ and if the non-tangential limit
\begin{equation}\label{Ang. Der. at infinity}
\psi'(\infty):=\lim_{w\to\infty}\frac{w}{\psi(w)}
\end{equation}
exists and is finite. This was proved by Matache in \cite{Matache weighted}.

 Nordgren, Rosenthal and Wintrobe \cite{Nordgren-Rosenthal-Wintrobe} gave a remarkable reformulation of the ISP in terms of composition operators on $H^2(\mathbb{D})$. They showed that if $\phi$ is a \emph{hyperbolic automorphism} of $\mathbb{D}$ (having two distinct fixed points on $\mathbb{T}:=\partial\mathbb{D}$), then $C_\phi-\lambda I$ is universal for all $\lambda$ in the interior of the point spectrum of $C_\phi$. Since $C_\phi-\lambda I$ and $C_\phi$ have the same invariant subspaces, the ISP has a positive solution if and only if the minimal non-trivial invariant subspaces of $C_\phi$ are all one dimensional. The last three decades have seen many works dedicated to this approach to the ISP (see \cite{Chkliar},\cite{Gallardo-Gorkin},\cite{Matache minimal},\cite{Matache eigenfunction},\cite{Mortini},\cite{Nordgren-Rosenthal-Wintrobe},\cite{Shapiro redux}). They all focus on the hyperbolic automorphism
\begin{equation}\label{Canonical Auto}
h(z)=\frac{z+a}{az+1}
\end{equation}
for some $0<a<1$ which has two fixed points $1$ and $-1$ on $\mathbb{T}$. Recently, the analogous question for $H^2(\mathbb{C}_+)$ was answered negatively by R. Schroderus and Hans-Olav Tylli \cite[Proposition 3.5]{Schroderus}. They proved that $C_\psi-\lambda I$ is \emph{not} universal for any $\lambda\in\mathbb{C}$ when $\psi$ is a hyperbolic automorphism of $\mathbb{C}_+$. No examples of composition operators with universal translates are known for $H^2(\mathbb{C}_+)$.

The plan of the paper is the following. Section 2 contains some preliminary definitions and results. In Section 3, all linear fractional symbols $\psi$ for which $C_\psi$ has universal translates on $H^2(\mathbb{C}_+)$ are characterized (see Theorem \ref{Main half-plane result}). As a consequence the first such example is discovered.

\begin{thm}If $\psi$ is an affine self-map of $\mathbb{C}_+$, then $C_\psi-\lambda I$ is universal on $H^2(\mathbb{C}_+)$ for some $\lambda\in\mathbb{C}$ if and only if $\psi(w)=aw+b$ with $a\in(1,\infty)$ and $\mathrm{Re}(b)>0$. 
\end{thm}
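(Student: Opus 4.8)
The plan is to analyze $C_\psi$ through the standard unitary model of $H^2(\mathbb{C}_+)$ and then apply the Caradus criterion. Since $\psi(w)=aw+b$ maps $\mathbb{C}_+$ into itself, one first checks that necessarily $a>0$ and $\mathrm{Re}(b)\ge 0$, and that $\psi'(\infty)=1/a$ is finite, so $C_\psi$ is bounded by Matache's theorem. The key preliminary step is to conjugate $C_\psi$ by the Paley--Wiener (Laplace transform) unitary $\mathcal{L}\colon L^2(0,\infty)\to H^2(\mathbb{C}_+)$, $(\mathcal{L}F)(z)=\tfrac{1}{\sqrt{2\pi}}\int_0^\infty F(t)e^{-zt}\,dt$, followed by the logarithmic substitution $s=e^x$, i.e. the unitary $V\colon L^2(0,\infty)\to L^2(\mathbb{R})$, $(VF)(x)=e^{x/2}F(e^x)$. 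A direct computation shows that $C_\psi$ is unitarily equivalent to the weighted translation operator
\[ (\tilde W g)(x)=a^{-1/2}e^{-(b/a)e^{x}}\,g(x-\ln a)\qquad(g\in L^2(\mathbb{R})).\]
This model turns the multiplier $a$ into the shift length $\ln a$ and encodes $\mathrm{Re}(b)$ in the decay of the weight $m(x)=a^{-1/2}e^{-(b/a)e^{x}}$: one has $|m(x)|\to 0$ as $x\to+\infty$ precisely when $\mathrm{Re}(b)>0$, while $|m(x)|\to a^{-1/2}$ as $x\to-\infty$.

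For the necessity direction I would first record the elementary fact that a universal operator must have infinite-dimensional kernel (take $T=0$ in the definition: $U|_{\mathcal{M}}$ similar to $0$ forces $\mathcal{M}\subseteq\ker U$). Thus it suffices to determine when $\ker(\tilde W-\lambda I)$ is infinite-dimensional. Restricting the eigenvalue equation $m(x)g(x-\ln a)=\lambda g(x)$ to a lattice $\{x_0+n\ln a:n\in\mathbb{Z}\}$ turns it into a first-order recursion whose homogeneous solutions are products of the ratios $m_n/\lambda$. Using the two limiting values of $|m|$, these products are two-sidedly square-summable if and only if $a>1$, $\mathrm{Re}(b)>0$ and $0<|\lambda|<a^{-1/2}$; in every other case (namely $0<a\le 1$, or $\mathrm{Re}(b)=0$) no nonzero $\ell^2$ solution exists on any lattice and $\ker(\tilde W-\lambda I)=\{0\}$ for all $\lambda$. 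When the three conditions do hold, the initial data $g|_{[0,\ln a)}$ may be chosen freely in $L^2[0,\ln a)$, so the eigenspace is infinite-dimensional. This simultaneously proves the ``only if'' direction and identifies the admissible eigenvalues. (Concretely, these eigenvectors correspond to the functions $(z-w_0)^{-s}\in H^2(\mathbb{C}_+)$ with $w_0=b/(1-a)$ in the left half-plane, eigenvalue $a^{-s}$ and $\mathrm{Re}(s)>1/2$; infinite multiplicity arises from replacing $s$ by $s+2\pi i k/\ln a$.)

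For sufficiency, fix $a>1$, $\mathrm{Re}(b)>0$ and any $\lambda$ with $0<|\lambda|<a^{-1/2}$. The previous step already gives $\dim\ker(\tilde W-\lambda I)=\infty$, so by Caradus it remains only to prove that $\tilde W-\lambda I$ is surjective, equivalently that its adjoint $(\tilde W^{*}-\bar\lambda I)g(x)=\overline{m(x+\ln a)}\,g(x+\ln a)-\bar\lambda g(x)$ is bounded below. On each lattice the adjoint homogeneous recursion has multiplier of modulus $|\lambda|/|m_n|$, which tends to $\infty$ at $+\infty$ and to $|\lambda|a^{1/2}<1$ at $-\infty$; thus the weight modulus stays uniformly bounded away from $|\lambda|$ near both ends (below $|\lambda|$ near $+\infty$, above $|\lambda|$ near $-\infty$). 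This hyperbolic dichotomy yields a uniform lower bound $\|(\tilde W^{*}-\bar\lambda I)g\|\ge\delta\|g\|$ by the standard Green's-function estimate for bilateral weighted shifts, giving surjectivity of $\tilde W-\lambda I$. Invoking the Caradus criterion then shows $C_\psi-\lambda I$ is universal.

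I expect the surjectivity step to be the main obstacle: the infinite-dimensional kernel and the parameter classification follow cleanly from the explicit recursion, but proving that $\tilde W^{*}-\bar\lambda I$ is bounded below requires turning the pointwise weight dichotomy into a genuine uniform estimate on $L^2(\mathbb{R})$, uniformly across the continuum of lattices. I would handle this by constructing an explicit bounded two-sided solution operator (a right inverse) from the forward-stable and backward-stable Green's functions determined by the two limiting weights, and checking its $L^2$-boundedness via a Schur test.
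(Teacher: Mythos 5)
Your proposal is correct in substance and arrives at the same model as the paper, but by a genuinely more self-contained route. Your weighted translation $\tilde W$ on $L^2(\mathbb{R})$ is the continuous presentation of exactly what the paper builds: Lemma \ref{equivalence to a weighted comp} conjugates $C_\psi$ by Paley--Wiener to $(Wf)(t)=a^{-1}e^{-bt/a}f(t/a)$ on $L^2(\mathbb{R}_+)$, and Lemma \ref{Equivalence to shift} then discretizes over the intervals $[a^{-n},a^{-n+1}]$ to get a bilateral weighted shift on $\ell^2(\mathbb{Z},L^2[1,a])$ with weights tending to $a^{-1/2}$ at one end and $0$ at the other --- your logarithmic substitution encodes the same decomposition, with your lattices $\{x_0+n\ln a\}$ playing the role of the fibers. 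Where you diverge is in both directions of the equivalence. For sufficiency the paper simply cites the Partington--Pozzi theorem (Theorem \ref{Partington-Pozzi}), which packages precisely the Caradus verification you propose to redo by hand; your Green's-function/dichotomy estimate for surjectivity of $\tilde W-\lambda I$ is the one substantive step you leave as a sketch, and while the hyperbolic-dichotomy idea is the standard correct one, you could shortcut it entirely by discretizing $L^2(\mathbb{R})\cong\ell^2(\mathbb{Z},L^2[0,\ln a))$ over a fundamental domain, which recovers the paper's vector shift verbatim and lets Partington--Pozzi do the work. For necessity your route is actually \emph{more} elementary than the paper's: you use only the $T=0$ observation (universal $\Rightarrow$ infinite-dimensional kernel) together with an explicit computation showing $\sigma_p(C_\psi)=\emptyset$ in every excluded case, whereas the paper invokes the Schroderus--Tylli criterion (Proposition \ref{Non-universality Schroderus}, nonempty interior of the point spectrum) plus spectral results from the literature (the circle/spiral containment for $a=1$ or $\mathrm{Re}(b)=0$, and an adjoint-shift argument for $a<1$). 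Two small points of care: in the parabolic case $a=1$ there is no lattice --- $\tilde W$ is a multiplication operator, and triviality of its kernel follows because the level sets of $x\mapsto e^{-be^x}$ are null, so your phrase ``on any lattice'' should be adjusted there; and your infinite-dimensionality claim for the eigenspaces needs the uniform-over-$x_0$ geometric tail bounds to conclude that arbitrary $L^2[0,\ln a)$ initial data yields an $L^2(\mathbb{R})$ eigenfunction, which does hold since the weight limits are uniform. Your closed-form eigenvectors $(z-w_0)^{-s}$, $w_0=b/(1-a)$, $\mathrm{Re}(s)>1/2$, with eigenvalue $a^{-s}$ and the $2\pi i k/\ln a$ degeneracy, are a correct consistency check that the paper does not record.
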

In Section 4, we characterize all linear fractional composition operators  $C_\phi$ with universal translates on $H^2(\mathbb{D})$ (see Theorem \ref{Hyperbolic comp universal D}). 

\begin{thm} If $\phi$ is a linear fractional self-map of $\mathbb{D}$, then $C_\phi-\lambda I$ is universal on $H^2(\mathbb{D})$ for some $\lambda\in\mathbb{C}$ if and only if $\phi$ has two distinct fixed points outside $\mathbb{D}$.
\end{thm}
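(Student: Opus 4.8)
The plan is to classify linear fractional self-maps of $\mathbb{D}$ by their fixed-point configuration and to reduce each case, by conjugation with a disk automorphism, to one of two normal forms whose universality is already understood. Since for an automorphism $\gamma$ of $\mathbb{D}$ one has $C_\gamma^{-1}C_\phi C_\gamma=C_{\gamma\circ\phi\circ\gamma^{-1}}$ with $C_\gamma$ bounded and invertible, such a conjugation is a genuine similarity; it preserves universality of translates and moves the fixed points of $\phi$ within $\widehat{\mathbb{C}}$ while respecting the partition $\mathbb{D}\cup\mathbb{T}\cup(\widehat{\mathbb{C}}\setminus\overline{\mathbb{D}})$. By the Denjoy--Wolff theorem the attracting fixed point lies in $\overline{\mathbb{D}}$, so a map with two distinct fixed points outside $\mathbb{D}$ has its Denjoy--Wolff point on $\mathbb{T}$ and the second fixed point either on $\mathbb{T}$ or in the exterior. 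A M\"obius rigidity argument (a self-map fixing two points of $\mathbb{T}$ conjugates, in $\mathbb{C}_+$, to one fixing $0$ and $\infty$, hence of the form $w\mapsto cw$, forcing an automorphism) shows the first alternative occurs only for hyperbolic automorphisms; these conjugate to $h$ in (\ref{Canonical Auto}), so $C_h-\lambda I$ is universal for suitable $\lambda$ by Nordgren--Rosenthal--Wintrobe. In the second alternative I would send the boundary fixed point to $1$ and the exterior one to $\infty$ by a disk automorphism, reducing $\phi$ to the affine model $\phi_a(z)=az+(1-a)$ with $0<a<1$.

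For the affine model the key observation is that the Cayley transform produces a \emph{constant} weight. Writing $\tau(z)=(1+z)/(1-z)$ and letting $V\colon H^2(\mathbb{C}_+)\to H^2(\mathbb{D})$ be the induced unitary, conjugation sends $C_\psi$ to $M_{w}C_{\phi_a}$ with $w=(1-\phi_a)/(1-z)$; since $1-\phi_a(z)=a(1-z)$ one gets $w\equiv a$, whence $VC_\psi V^{-1}=a\,C_{\phi_a}$ with $\psi(w)=\tfrac1a w+\tfrac{1-a}{a}$. As $\psi$ is an affine self-map of $\mathbb{C}_+$ with slope $1/a\in(1,\infty)$ and $\mathrm{Re}\big(\tfrac{1-a}{a}\big)>0$, Theorem~\ref{Main half-plane result}, in the affine case, makes $C_\psi-\mu I$ universal for a suitable $\mu$; because universality is invariant under unitary equivalence and under nonzero scalar multiples, $C_{\phi_a}-(\mu/a)I$ is universal on $H^2(\mathbb{D})$. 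This isolates why the hyperbolic \emph{automorphism} must be handled separately: there $1-h(z)=(1-a)(1-z)/(az+1)$, so the analogous weight is non-constant and the clean transfer fails, consistent with the non-universality of $C_{w\mapsto cw}$ on $H^2(\mathbb{C}_+)$ established by Schroderus--Tylli.

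For the converse I would first record that a universal operator $U$ must have infinite-dimensional kernel: applying the definition with $T$ the zero operator on $\mathcal{H}$ produces an infinite-dimensional invariant subspace on which $U$ vanishes. It therefore suffices to show that every linear fractional $\phi$ \emph{without} two distinct fixed points outside $\mathbb{D}$ has $\dim\ker(C_\phi-\lambda I)<\infty$ for all $\lambda$. If $\phi$ has a fixed point $p\in\mathbb{D}$ (the elliptic and interior Denjoy--Wolff cases), the Koenigs/Schr\"oder linearization forces each eigenspace of $C_\phi$ to be one dimensional (and $C_\phi$ is compact or similar to a normal operator), so no translate has infinite-dimensional kernel. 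If $\phi$ is parabolic, with single boundary fixed point conjugated to $1$, the eigenvalue equation becomes $F(w+b)=\lambda F(w)$ in the coordinate $w=\tau(z)$, with solutions $f(z)=\exp\!\big(c\,\tfrac{1+z}{1-z}\big)$, $e^{cb}=\lambda$; the boundary computation $\big|f(e^{i\theta})\big|=\exp(-\mathrm{Im}(c)\cot(\theta/2))$ shows membership in $H^2(\mathbb{D})$ forces $\mathrm{Im}(c)=0$, so only one branch survives and each eigenspace is again one dimensional. Hence parabolic symbols are not universal, completing the equivalence.

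The point I expect to require the most care is precisely the dichotomy between the parabolic and hyperbolic boundary cases. It is the \emph{bounded} versus \emph{unbounded} behaviour of the argument of the eigenfunctions near the fixed point -- $\arg(1-z)$ stays bounded while $\cot(\theta/2)$ does not -- that lets infinitely many branches $(1-z)^{s_k}$, all sharing the real part $\mathrm{Re}(s_k)=\log|\lambda|/\log a$ forced by $a^{s_k}=\lambda$, lie in $H^2(\mathbb{D})$ in the hyperbolic-type case, while only a single exponential branch survives in the parabolic case. Verifying the surjectivity half of the Caradus criterion directly on $H^2(\mathbb{D})$ for the affine model would be the other delicate step; the constant-weight identification above is what allows me to import both conditions from the already-established half-plane theorem rather than re-proving them on the disk.
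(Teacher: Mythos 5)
Your forward direction is essentially the paper's own argument: you reduce the hyperbolic non-automorphism to the affine model $\phi_a(z)=az+(1-a)$ (the paper quotes Hurst's similarity theorem where you conjugate by a disk automorphism explicitly, but these amount to the same similarity), you reproduce the constant-weight Cayley computation of Lemma \ref{equiv between C and D} giving $VC_\psi V^{-1}=aC_{\phi_a}$ with $\psi(w)=a^{-1}w+(a^{-1}-1)$ of type $\mathrm{II}$, you import universality of translates from the half-plane result (Theorem \ref{Main half-plane result}), and you cite \cite{Nordgren-Rosenthal-Wintrobe} for the automorphism case. The converse, however, has a genuine gap. You replace the paper's necessary condition (Proposition \ref{Non-universality Schroderus}: the point spectrum of a universal operator has non-empty interior) by the weaker condition that a universal operator has infinite-dimensional kernel, and that weaker condition is simply \emph{not violated} in two of the cases you must exclude. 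For a parabolic automorphism, conjugate to $w\mapsto w+i\beta$ on $\mathbb{C}_+$ with $\beta\in\mathbb{R}\setminus\{0\}$, the singular inner functions $f_t(z)=\exp\bigl(-t\tfrac{1+z}{1-z}\bigr)$, $t\geq 0$, lie in $H^\infty\subset H^2(\mathbb{D})$ and satisfy $C_\phi f_t=e^{-it\beta}f_t$; since $t\mapsto e^{-it\beta}$ is periodic, each unimodular $\lambda$ is attained by infinitely many linearly independent $f_t$, so $\ker(C_\phi-\lambda I)$ \emph{is} infinite dimensional and your criterion can never rule out universality here. Your computation forcing $\mathrm{Im}(c)=0$ in $f=\exp\bigl(c\tfrac{1+z}{1-z}\bigr)$ is correct, but the conclusion that "only one branch survives" holds only for parabolic \emph{non}-automorphisms, where $\mathrm{Re}(b)>0$ pins down $t$ through $|\lambda|=e^{-t\,\mathrm{Re}(b)}$; for automorphisms $e^{cb}=\lambda$ with $b$ purely imaginary has infinitely many real solutions $c=-t$. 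The elliptic case breaks as well: for $\phi(z)=-z$ (interior fixed point, rational rotation angle) the eigenspace of $C_\phi$ at $\lambda=1$ is the infinite-dimensional subspace of even functions, contradicting your claim that Koenigs linearization forces one-dimensional eigenspaces whenever the fixed point is interior.

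These are precisely the cases the paper disposes of with the spectral criterion, which is insensitive to multiplicities: $\sigma_p(C_\phi)$ is $\mathbb{T}$ for parabolic automorphisms, a spiral for parabolic non-automorphisms, and a subset of $\{1\}\cup\{\phi'(p)^n\}$ (a countable set, or a subset of a circle for elliptic automorphisms) when the fixed point is interior -- in every case with empty interior, so no translate is universal. The kernel criterion cannot be patched in the parabolic- and rational-elliptic-automorphism cases, because there the kernel genuinely is infinite dimensional; you need the finer necessary condition (or some ad hoc replacement). A secondary caveat even where your multiplicity claim is true: you tested only the exponential family, whereas the functional equation $F(w+b)=\lambda F(w)$ has an infinite-dimensional solution space in $\mathrm{Hol}(\mathbb{C}_+)$ (multiply any solution by a $b$-periodic function, or prescribe $F$ freely on a fundamental strip), so finite multiplicity in $H^2$ requires an argument beyond checking the branches $e^{cw}$. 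Your forward direction can stand as written; the converse should be rerun through Proposition \ref{Non-universality}.
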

The surprising new example here is the \emph{affine} self-map of $\mathbb{D}$  defined by
\[
\phi_a(z)=az+(1-a), \ \ 0<a<1
\]
with fixed points $1$ and $\infty$. It is interesting that R. Schroderus and Hans-Olav Tylli \cite[Example 3.4]{Schroderus} considered $C_{\phi_a}$ and showed that it is not universal (since it is injective), but they did not consider its translates $C_{\phi_a}-\lambda I$. Recently Cowen and Gallardo-Gutiérrez \cite[Theorem 8]{Cowen- Gallardo 4} showed that $C_{\phi_a}$ (with $a=e^{-2\pi}$)  is associated with a particular universal Toeplitz operator on the classical Bergman space. The rest of this work is focused exclusively on analysing $C_{\phi_a}$. 

Section 5 focuses on the minimal invariant subspaces of $C_{\phi_a}$ in $H^2:=H^2(\mathbb{D})$. These subspaces are necessarily \emph{cyclic subspaces}
\[
K_f=\overline{\mathrm{span}\{C^n_{\phi_a} f: n\geq 0\}}^{H^2}
\] 
generated by some $f\in H^2$. Therefore the ISP has a positive solution if and only if $K_f$ is a minimal invariant subspace for $C_{\phi_a}$ precisely when $f$ is an eigenvector. We first show that only those $K_f$ with $f\in H^2$ analytic at each point of $\mathbb{T}\setminus\{1\}$ need to be considered (see Proposition \ref{ISP T\1}). Define the \emph{radial limit} of $f\in H^2$  at $1$ by $f^*(1):=\lim_{r\to1^{-}}f(r)$ if it exists. Therefore resolving the ISP amounts to characterizing the minimality of $K_f$ in the following three cases: \\ 

$\mathbf{A.}$ \emph{$f^*(1)$ is finite and non-zero.} \ \ \ \ \ \ \ \ \ $\mathbf{B.}$ $f^*(1)$ \emph{is equal to zero.} \\

$\mathbf{C.}$ \emph{$f^*(1)$ does not exist.} \\ \\
Let $f_s(z)=(1-z)^s$ for $\mathrm{Re}(s)>-1/2$ which are eigenvectors for all the $C_{\phi_a}$. The main result on minimal invariant subspaces is the following (see Theorem \ref{Main Minimal Invariant Thm}). 
\begin{thm}\label{Main minimal}Let $f=f_sg$ for some $g\in H^2$ and $\mathrm{Re}(s)\geq 0$ with $\lim_{n\to\infty}g(1-a^n)$ finite and non-zero.
	Then $K_f$ is a minimal invariant subspace for $C_{\phi_a}$ if and only if $f$ is a scalar multiple of $f_s$.
\end{thm}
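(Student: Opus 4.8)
The plan is to establish both directions of the equivalence, with the forward direction (minimality implies $f$ is a scalar multiple of $f_s$) being the substantive content. First I would set up the structure of the cyclic subspace $K_f$ by examining how $C_{\phi_a}$ acts on functions of the form $f_s g$. Since $f_s(z)=(1-z)^s$ is an eigenvector, a direct computation gives $C_{\phi_a}(f_s g)=f_s(\phi_a) \cdot (g\circ\phi_a)$, and because $1-\phi_a(z)=a(1-z)$, we get $C_{\phi_a}(f_s g)=a^s f_s \cdot (g\circ\phi_a)$. Iterating, $C_{\phi_a}^n f = a^{ns} f_s\,(g\circ\phi_a^n)$, where $\phi_a^n(z)=a^n z+(1-a^n)$ pushes points toward the fixed point $1$. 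This shows that $K_f = f_s\cdot \overline{\mathrm{span}\{a^{ns}(g\circ\phi_a^n):n\geq 0\}}$, so the problem reduces to understanding the closed span generated by the orbit of $g$ under the dilation-type maps $\phi_a^n$ together with the multiplier $f_s$.

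Next I would exploit the key hypothesis that $\lim_{n\to\infty} g(1-a^n)$ is finite and non-zero, call it $L$. Observe that $\phi_a^n(1)=1$ and, more importantly, that evaluating $g\circ\phi_a^n$ near the fixed point samples $g$ along the sequence $1-a^n$ approaching $1$ radially. The plan is to show that as $n\to\infty$ the normalized iterates $a^{-ns}C_{\phi_a}^n f = f_s\,(g\circ\phi_a^n)$ converge (in an appropriate sense — weakly or after normalization) to a scalar multiple of the eigenvector $f_s$, precisely because $g\circ\phi_a^n\to L$ pointwise on compact subsets of $\mathbb{D}$ (the iterates $\phi_a^n\to 1$ uniformly on compacta, and $g$ is continuous). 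This forces $f_s$ itself to lie in $K_f$. Once $f_s\in K_f$, the one-dimensional space $\mathbb{C} f_s$ is a nontrivial $C_{\phi_a}$-invariant subspace of $K_f$; if $f$ is not already a scalar multiple of $f_s$, then $K_f$ strictly contains $\mathbb{C} f_s$ and hence is not minimal. This yields the forward implication.

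For the reverse direction, if $f=cf_s$ is a scalar multiple of the eigenvector, then $K_f=\mathbb{C} f_s$ is one-dimensional, hence trivially minimal; this direction is immediate once one confirms $f_s\in H^2(\mathbb{D})$ for $\mathrm{Re}(s)\geq 0$, which follows from $\mathrm{Re}(s)>-1/2$. The real work is entirely in the forward direction.

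The main obstacle I anticipate is making the convergence argument rigorous: pointwise-on-compacta convergence of $f_s(g\circ\phi_a^n)$ to $Lf_s$ is easy, but to conclude $f_s\in K_f$ I need genuine convergence in the $H^2$ norm (or at least weak convergence to a nonzero limit inside the closed subspace $K_f$, using that norm-bounded sequences have weakly convergent subsequences and that $K_f$ is weakly closed). The delicate point is controlling the $H^2$ norms of $g\circ\phi_a^n$ uniformly and handling the boundary behaviour at the fixed point $1$, where $f_s$ may be singular and where the hypothesis on the radial limit of $g$ does its essential work. I would expect to need the reduction from Proposition~\ref{ISP T\1} (restricting to $f$ analytic on $\mathbb{T}\setminus\{1\}$) and careful estimates on $\|C_{\phi_a}^n\|$ and the reproducing kernel near $1$ to upgrade pointwise convergence to the weak-$H^2$ convergence that places the nonzero limit $Lf_s$ inside $K_f$.
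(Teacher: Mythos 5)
Your skeleton is the same as the paper's: write $C_{\phi_a}^n f = a^{ns} f_s\,(g\circ\phi_{a^n})$, show the normalized iterates converge to $Lf_s$ so that the eigenvector $f_s$ lands in $K_f$, and then both directions of the equivalence follow. But the step you yourself flag as the ``main obstacle'' --- upgrading pointwise convergence on compacta to a mode of convergence that actually places $Lf_s$ inside $K_f$ --- is the entire mathematical content of the theorem, and your proposal contains no workable mechanism for it. The weak-compactness route needs $\sup_n\norm{g\circ\phi_{a^n}}_2<\infty$, and neither tool you name can deliver this. The operator norms are useless: $\norm{C_{\phi_a}^n}=\norm{C_{\phi_{a^n}}}$ grows like $a^{-n/2}$ (the spectral radius of $C_{\phi_a}$ is $a^{-1/2}$), so there is no uniform bound. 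Moreover, for general $g\in H^2$ the sequence $\norm{C_{\phi_{a^n}}g}_2$ genuinely can be unbounded: take $g=f_t$ with $-1/2<\mathrm{Re}(t)<0$, for which $\norm{C_{\phi_{a^n}}g}_2=a^{n\mathrm{Re}(t)}\norm{f_t}_2\to\infty$. So any norm bound must exploit the hypothesis $g(1-a^n)\to L$, i.e.\ information about $g$ along one discrete orbit, and converting that into control of $\norm{g\circ\phi_{a^n}}_2$ --- an average of $|g|^2$ over the shrinking circles $\phi_{a^n}(\mathbb{T})$ internally tangent to $\mathbb{T}$ at $1$ --- is exactly the hard step. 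Reproducing-kernel estimates near $1$ do not bridge this, and Proposition \ref{ISP T\1} is not used in the paper's proof at all.

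The missing ingredient is Shapiro's change-of-variables formula \eqref{Change of variables Shapiro}. Since $f_s\in H^\infty$ when $\mathrm{Re}(s)\geq 0$, multiplication by $f_s$ is bounded, say with constant $M$, and
\[
\norm{a^{-ns}C^n_{\phi_a}f - Lf_s}_2^2
=\norm{f_s\,(C^n_{\phi_a}g-L)}_2^2
\leq M\norm{C_{\phi_{a^n}}(g-L)}_2^2
= 2M\int_{\mathbb{D}}|g'(w)|^2N_{\phi_{a^n}}(w)\,dA(w)+M\,|g(1-a^n)-L|^2 .
\]
The point-evaluation term is killed by the hypothesis on $g(1-a^n)$, and the integral term tends to $0$ for \emph{every} $g\in H^2$: the images $\phi_{a^n}(\mathbb{D})$ are disks of radius $a^n$ shrinking to the point $1$, so for each fixed $w\in\mathbb{D}$ one has $N_{\phi_{a^n}}(w)=0$ for all large $n$, and a convergence theorem finishes the job. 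This yields genuine $H^2$-norm convergence $a^{-ns}C^n_{\phi_a}f\to Lf_s$, which is both stronger and shorter than your weak-convergence scheme; the Nevanlinna counting function, not the reproducing kernel, is what does the work. (A minor separate imprecision: your identity $K_f=f_s\cdot\overline{\mathrm{span}}\{a^{ns}(g\circ\phi_a^n):n\geq0\}$ is unjustified, since multiplication by $f_s$ need not have closed range; fortunately nothing in the argument requires it.)
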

 Putting $s=0$ in Theorem \ref{Main minimal} resolves case $\mathbf{A}$ in full generality for $C_{\phi_a}$. In fact $f^*(1)$ need not even exist. The known results in the literature that treat case $\mathbf{A}$ via the hyperbolic composition operator $C_h$ (see \eqref{Canonical Auto}) all require additional hypotheses (see \cite{Gallardo-Gorkin},\cite{Matache minimal},\cite{Matache eigenfunction},\cite{Mortini}). 

\begin{cor} Let $f\in H^2$ with $\lim_{n\to\infty}f(1-a^n)$ finite and non-zero. Then $K_f$ is minimal for $C_{\phi_a}$ if and only if $f$ is constant.
\end{cor}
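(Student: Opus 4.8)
The plan is to obtain this statement as the special case $s=0$ of Theorem~\ref{Main minimal}. First I would note that $f_0(z)=(1-z)^0\equiv 1$ is the constant function, and that $\mathrm{Re}(0)=0\geq 0$, so $s=0$ is an admissible choice of parameter in Theorem~\ref{Main minimal}.

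Next, given any $f\in H^2$ satisfying the hypothesis of the corollary, I would write $f=f_0\cdot f$, so that in the notation of Theorem~\ref{Main minimal} one takes $g=f$. The standing requirement there that $\lim_{n\to\infty}g(1-a^n)$ be finite and non-zero then reads exactly as $\lim_{n\to\infty}f(1-a^n)$ finite and non-zero, which is precisely the hypothesis of the corollary. Applying Theorem~\ref{Main minimal} with these choices, $K_f$ is a minimal invariant subspace for $C_{\phi_a}$ if and only if $f$ is a scalar multiple of $f_0\equiv 1$, that is, if and only if $f$ is constant. This is exactly the assertion of the corollary.

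The only substantive point is the identification $f_0\equiv 1$; everything else is a direct transcription of the hypotheses and conclusion of Theorem~\ref{Main minimal} into the case $s=0$. Consequently I do not expect any genuine obstacle here: the corollary is literally the $s=0$ instance of the main theorem, as already signalled by the remark following Theorem~\ref{Main minimal} that this case resolves case $\mathbf{A}$ in full generality. The conceptual content lies entirely in the theorem being specialized, not in the specialization itself.
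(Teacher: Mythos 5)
Your proposal is correct and matches the paper exactly: the paper gives no separate proof of this corollary, presenting it (as Corollary~\ref{Minimal f* not 0}) as the immediate $s=0$ specialization of Theorem~\ref{Main Minimal Invariant Thm} with $g=f$ and $f_0\equiv 1$, which is precisely your argument. The only cosmetic difference is that the paper's version of the conclusion identifies the constant as $L=\lim_{n\to\infty}f(1-a^n)$, which your reading also yields.
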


 With the additional assumption of analyticity of $f$ at $1$ we also characterize case $\mathbf{B}$ (see Corollary \ref{Minimal anal at 1}). It is worth noting that this result is not true for $C_h$ because the functions $f_N$ are not eigenvectors for $C_h$ (see \cite[Prop. 2.1]{Gallardo-Gorkin} or \cite[Prop. 2.3]{Matache eigenfunction}).

\begin{cor} Let $f\in H^2$ be analytic at $1$. Then $K_f$ is minimal for $C_{\phi_a}$ if and only if $f$ is a scalar multiple of $f_N(z)=(1-z)^N$ for some $N\in\mathbb{N}$.
\end{cor}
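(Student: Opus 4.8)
The plan is to reduce the statement to Theorem \ref{Main minimal} by factoring $f$ through its power series at the fixed point $1$. Assuming $f\not\equiv 0$, analyticity at $1$ means $f$ extends holomorphically across $1$, so it has a finite order of vanishing $N\geq 0$ there, with local expansion $f(z)=\sum_{k\geq N}c_k(1-z)^k$ and $c_N\neq 0$. I would then set $h(z)=\sum_{k\geq 0}c_{N+k}(1-z)^k$ and record the factorization $f=f_N\,h$, where $f_N(z)=(1-z)^N$ and $h$ is again analytic at $1$ with $h(1)=c_N\neq 0$. It is precisely analyticity, rather than mere continuity, at $1$ that produces this clean factorization with a finite $N$.

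The one technical point is to check that $h\in H^2$, so that $h$ qualifies as the multiplier $g$ in Theorem \ref{Main minimal}. Since $(1-z)^{-N}$ is holomorphic on $\mathbb{D}$ — its only singularity being the boundary point $1$ — the quotient $h=f\,(1-z)^{-N}$ is holomorphic on $\mathbb{D}$. For the boundary values I would split $\mathbb{T}$ into a small arc $I$ about $1$ and its complement: on $I$ the function $h$ stays bounded because $f$ extends analytically and its order-$N$ zero is exactly cancelled, while on $\mathbb{T}\setminus I$ one has $|1-z|\geq c>0$, so $|h|\leq c^{-N}|f|$. As $f\in L^2(\mathbb{T})$ this gives $h\in L^2(\mathbb{T})$, and holomorphy on $\mathbb{D}$ then yields $h\in H^2$.

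With the factorization $f=f_N h$ and $h\in H^2$ in hand, the remaining hypotheses of Theorem \ref{Main minimal} hold automatically: the parameter $s=N$ has $\mathrm{Re}(s)=N\geq 0$, and continuity of $h$ at $1$ gives $\lim_{n\to\infty}h(1-a^n)=h(1)=c_N$, finite and non-zero. Theorem \ref{Main minimal} then shows that $K_f$ is minimal for $C_{\phi_a}$ if and only if $f$ is a scalar multiple of $f_N$, which is the assertion; the converse direction is in any case clear from $C_{\phi_a}f_N=a^N f_N$, which makes $K_{f_N}$ one dimensional and hence minimal. The only work specific to this corollary is the factorization and the membership $h\in H^2$, so the main obstacle is just the boundary estimate showing that dividing out the order-$N$ zero at $1$ preserves square-integrability — exactly where the analyticity hypothesis is used.
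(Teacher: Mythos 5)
Your proposal is correct in substance but takes a genuinely different route from the paper. The paper never proves that the cofactor $g=f/f_N$ lies in $H^2$: it first disposes of the case $f(1)\neq 0$ via Corollary \ref{Minimal f* not 0}, and when $f(1)=0$ it applies the semigroup trick of composing with $\phi_{a^n}$ for $n$ large, writing $C_{\phi_{a^n}}f=a^{nN}f_N\,(g\circ\phi_{a^n})$ and using the fact that $\overline{\phi_{a^n}(\mathbb{D})}$ is eventually contained in the region $\mathbb{D}\cup U$ where $g$ is analytic, so that the cofactor $g\circ\phi_{a^n}$ is \emph{bounded} on $\mathbb{D}$; Theorem \ref{Main Minimal Invariant Thm} applied to $h:=C_{\phi_{a^n}}f/a^{nN}$ then yields $f_N\in K_h\subset K_f$, which forces $K_f=\mathbb{C}f_N$ when $K_f$ is minimal. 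Your direct factorization $f=f_Nh$ instead requires the membership $h\in H^2$, and that is the one spot where your write-up needs repair: the inference ``$h$ has $L^2(\mathbb{T})$ boundary values and is holomorphic on $\mathbb{D}$, hence $h\in H^2$'' is false in general --- $\exp\bigl(\tfrac{1+z}{1-z}\bigr)$ has unimodular boundary values but lies in no Hardy space --- unless one additionally invokes Smirnov's theorem, noting $h\in N^+$ because $(1-z)^N$ is outer. Fortunately your own two-region estimate proves the claim directly if you run it on every circle $|z|=r<1$ rather than only on $\mathbb{T}$: $h$ is bounded on $\mathbb{D}\cap B(1,\delta)$ because the analytic extension of $f$ cancels the order-$N$ zero, while $|h|\leq c^{-N}|f|$ on $\mathbb{D}\setminus B(1,\delta)$, so the integral means of $|h|^2$ over the circles $|z|=r$ are uniformly bounded and $h\in H^2$ by definition of the norm. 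With that fix your argument is complete, and it is arguably cleaner than the paper's in that it treats $N=0$ and $N\geq 1$ uniformly and applies Theorem \ref{Main Minimal Invariant Thm} to $f$ itself; what the paper's version buys is freedom from any boundary integrability estimate, since composing with $\phi_{a^n}$ converts the cofactor into an $H^\infty$ multiplier for free.
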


We also show that for $f$ belonging to \emph{singular shift-invariant subspaces}, the $K_f$ are always non-minimal (see Proposition \ref{Singular shift}). These functions are examples of case $\mathbf{B}$ not covered by Theorem \ref{Main minimal}. The last section is dedicated to eigenvectors of $C_{\phi_a}$. For instance, if $f$ is an eigenvector that is analytic at the point $1$, then $f$ must be a scalar multiple of $(1-z)^N$ for some $N\in\mathbb{N}$. Similarly if $f^*(1)$ is finite and non-zero, then $f$ must be a constant (see Theorem \ref{Main eigenvector}). In general any eigenvector can be analytically continued to the whole half-plane $\mathbb{H}:=\{z\in\mathbb{C}:\mathrm{Re}(z)<1\}$ (see Theorem \ref{Eigenvector domain}). We finally show that $f_s(z)=(1-z)^s$ for $\mathrm{Re}(s)>-1/2$ are the \emph{only} common eigenvectors in $H^2$ shared by all $C_{\phi_a}$ with $0<a<1$.

\section{Preliminaries}
\subsection{The Hardy space $H^2(\mathbb{D})$} We denote by $\mathbb{D}$ and $\mathbb{T}$ the open unit disk and the unit circle respectively. An analytic function $f$ on $\mathbb{D}$ belongs to the Hardy space $H^2(\mathbb{D})$ if
\[
||f||_{\mathbb{D}}=\sup_{0\leq r<1}\left(\frac{1}{2\pi}\int_0^{2\pi}|f(re^{i\theta})|^2d\theta\right)^{1/2}<\infty.
\]
Similarly let $H^\infty$ denote the space of bounded analytic functions on $\mathbb{D}$. For any $f\in H^2(\mathbb{D})$ and $\zeta\in\mathbb{T}$, the radial limit $f^*(\zeta):=\lim_{r\to 1^-}f(r\zeta)$ exists $m$-a.e. on $\mathbb{T}$, where $m$ denotes the normalized Lebesgue measure on $\mathbb{T}$. If $\phi$ is an analytic self-map of $\mathbb{D}$, the \emph{Nevanlinna counting function} for $\phi$ is defined for $w\in \mathbb{D}\setminus\{\phi(0)\}$ by
\[
N_\phi(w)=\sum_{z\in\phi^{-1}\{w\}}\log\frac{1}{|z|}
\]
where $\phi^{-1}\{w\}$ is the sequence of $\phi$-preimages of $w$ repeated according to their multiplicities. If $w\notin\phi(\mathbb{D})$ then $N_\phi(w)$ is defined to be $0$. We shall need a change of variables formula used by Shapiro in his seminal work on compact composition operators \cite[Corollory 4.4]{Shapiro Annals}:
\begin{equation}\label{Change of variables Shapiro}
||C_\phi f||^2_2=2\int_{\mathbb{D}}|f'(w)|^2 N_\phi(w) dA(w)+|f(\phi(0))|^2
\end{equation}
for any $f$ holomorphic on $\mathbb{D}$ and where $dA$ is the normalized area measure on $\mathbb{D}$.

\subsection{The Hardy space $H^2(\mathbb{C}_+)$} Let $\mathbb{C}_+$ be the open right half-plane. The Hardy space $H^2(\mathbb{C}_+)$ is the Hilbert space of analytic functions on $\mathbb{C}_+$ for which the norm 
\[
||f||_{\mathbb{C}_+}=\left(\sup_{0<x<\infty}\int_{-\infty}^\infty|f(x+iy)|^2dy\right)^{1/2}<\infty.
\]
By the \emph{Paley-Wiener Theorem} the transformation
 defined by
\begin{equation}\label{Paley-Weiner}
(Pf)(w)=\int_{\mathbb{R}_+}f(t)e^{-tw}dt
\end{equation}
is an isometric isomorphism of $L^2(\mathbb{R}_+)$  onto $H^2(\mathbb{C}_+)$, where $\mathbb{R}_+$ denotes the non-negative real numbers. It is known that a composition operator $C_\psi$ on $H^2(\mathbb{C}_+)$ is unitarily equivalent to the \emph{weighted composition operator} $W_\Phi$ on $H^2(\mathbb{D})$ defined by
\begin{equation}\label{equiv C D}
(W_\Phi f)(z)=\frac{1-\Phi(z)}{1-z}f(\Phi(z)) 
\end{equation}
where $\Phi=\gamma^{-1}\circ\psi\circ \gamma:\mathbb{D}\to\mathbb{D}$ and $\gamma(z)=\frac{1+z}{1-z}$ is the \emph{Cayley transform} of $\mathbb{D}$ onto $\mathbb{C}_+$ with $\gamma^{-1}(z)=\frac{z-1}{z+1}$ (see \cite[Lemma 2.1]{Chalendar Partington}).

\subsection{Linear fractional self-maps of $\mathbb{D}$ and $\mathbb{C}_+$}
The linear fractional self-maps 
\[
\phi(z)=\frac{az+b}{cz+d}
\]
of $\mathbb{D}$ with $a,b,c,d\in\mathbb{C}$ satisfying $ad-bc\neq 0$ have two fixed points in $\widehat{\mathbb{C}}=\mathbb{C}\cup\{\infty\}$. If $\phi$ has only one fixed point then it is necessarily on $\mathbb{T}$. Of particular importance to us in relation to universality on $H^2(\mathbb{D})$ are those $\phi$ with two distinct fixed points outside $\mathbb{D}$ which we shall call \emph{hyperbolic maps}. When both fixed points belong to the unit circle $\mathbb{T}$ then it is a \emph{hyperbolic automorphism}. 

Matache \cite{Matache comp halfplane} showed that the only  linear fractional self-maps of $\mathbb{C}_+$ that induce bounded composition operators on $H^2(\mathbb{C}_+)$ are the \emph{affine maps}
\begin{equation}\label{LFT symbol}
\psi(w)=aw+b
\end{equation}
where $a>0$ and Re$(b)\geq 0$. Such a map $\psi$ is said to be of \emph{hyperbolic type} if $a\neq 1$ and is a hyperbolic automorphism if additionally Re$(b)=0$. In particular, the \emph{hyperbolic non-automorphisms} are the symbols 
\[
\psi(w)=aw+b \ \ \mathrm{with}  \  a\in (0,1)\cup(1,\infty) \ \mathrm{and}  \  \mathrm{Re}(b)>0
\]
and we shall say $\psi$ is of \emph{type $\mathrm{I}$} if $a\in(0,1)$ and of \emph{type} $\mathrm{II}$ if $a\in(1,\infty)$. The symbols $\psi$ of \emph{type} $\mathrm{II}$ shall be of interest to us in relation to universality on $H^2(\mathbb{C}_+)$.

\subsection{A necessary condition for universality} R. Schroderus and Hans-Olav Tylli \cite[Corollary 2.3]{Schroderus} proved the following necessary condition for the universality of a bounded operator $T$ on a separable Hilbert space $\mathcal{H}$.

\begin{prop}\label{Non-universality Schroderus} If $T$ is universal on $\mathcal{H}$, then the point spectrum $\sigma_p(T)$ of $T$ has non-empty interior in $\mathbb{C}$.
\end{prop}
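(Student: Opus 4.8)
The plan is to exploit the definition of universality by feeding in a single carefully chosen test operator whose point spectrum already has non-empty interior, and then transfer that fatness to $\sigma_p(T)$. The two elementary facts I will need are: (i) similar operators have identical point spectra, since $A=P^{-1}BP$ forces $Av=\lambda v$ to be equivalent to $B(Pv)=\lambda(Pv)$, so the invertible map $P^{-1}$ carries $B$-eigenvectors to $A$-eigenvectors with the same eigenvalue; and (ii) if $\mathcal{M}$ is invariant for $T$, then $\sigma_p(T|_\mathcal{M})\subseteq\sigma_p(T)$, because any eigenvector $x\in\mathcal{M}$ of $T|_\mathcal{M}$ satisfies $Tx=\lambda x$ and hence is an eigenvector of $T$ with the same eigenvalue.

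For the test operator I will take the backward (unilateral) shift. Fixing an orthonormal basis $(e_n)_{n\geq 0}$ of $\mathcal{H}$ (which exists and is infinite since $\mathcal{H}$ is separable and infinite dimensional), let $B$ be the bounded operator defined by $Be_0=0$ and $Be_n=e_{n-1}$ for $n\geq 1$. For each $\lambda$ with $|\lambda|<1$ the vector $v_\lambda=\sum_{n\geq 0}\lambda^n e_n$ lies in $\mathcal{H}$ and satisfies $Bv_\lambda=\lambda v_\lambda$, so the open unit disk $\mathbb{D}$ is contained in $\sigma_p(B)$; in particular $\sigma_p(B)$ has non-empty interior.

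Now I invoke universality. Since $T$ is universal on $\mathcal{H}$, there exist a scalar $\alpha\neq 0$ and a closed invariant subspace $\mathcal{M}$ for $T$ such that $T|_\mathcal{M}$ is similar to $\alpha B$. By fact (i), $\sigma_p(T|_\mathcal{M})=\sigma_p(\alpha B)=\alpha\,\sigma_p(B)\supseteq\alpha\mathbb{D}$, which is the open disk of radius $|\alpha|>0$ about the origin. Applying fact (ii) gives $\alpha\mathbb{D}\subseteq\sigma_p(T|_\mathcal{M})\subseteq\sigma_p(T)$, so $\sigma_p(T)$ contains a disk of positive radius and therefore has non-empty interior, as claimed.

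There is no serious obstacle here: the argument is essentially a one-line consequence of the definition once the correct test operator is identified. The only point requiring mild care is the choice of an operator whose point spectrum is genuinely two-dimensional (contains an open set) rather than merely large; the backward shift is the standard such example, and the scaling by $\alpha$ preserves openness since multiplication by a nonzero scalar is a homeomorphism of $\mathbb{C}$.
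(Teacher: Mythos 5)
Your proof is correct: testing universality against the backward shift $B$ (whose point spectrum contains $\mathbb{D}$ via the eigenvectors $v_\lambda=\sum_{n\geq0}\lambda^n e_n$), then using that similarity preserves point spectra and that $\sigma_p(T|_\mathcal{M})\subseteq\sigma_p(T)$ for an invariant subspace $\mathcal{M}$, gives an open disk $\alpha\mathbb{D}\subseteq\sigma_p(T)$. The paper itself offers no proof, quoting the result from Schroderus and Tylli \cite[Corollary 2.3]{Schroderus}, and their argument is essentially the same one you found, so your proposal matches the standard approach.
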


Using this we see that most linear fractional composition operators on $H^2(\mathbb{D})$ and $H^2(\mathbb{C}_+)$ do \emph{not} have universal translates.

\begin{prop}\label{Non-universality} Let $\phi$ and $\psi$ be linear fractional self-maps of $\mathbb{D}$ and $\mathbb{C}_+$ respectively. If $\phi$ is not a hyperbolic map, then $C_\phi-\lambda$ is not universal on $H^2(\mathbb{D})$ for any $\lambda\in\mathbb{C}$. Similarly if $\psi$ is not a hyperbolic non-automorphism, then $C_\psi-\lambda$ is not universal on $H^2(\mathbb{C}_+)$ for any $\lambda\in\mathbb{C}$.
	\end{prop}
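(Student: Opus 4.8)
The plan is to deduce everything from the necessary condition in Proposition \ref{Non-universality Schroderus}. For any $\lambda$ the kernel of $C_\phi-\lambda I-\mu I$ coincides with that of $C_\phi-(\lambda+\mu)I$, so the point spectra satisfy $\sigma_p(C_\phi-\lambda I)=\sigma_p(C_\phi)-\lambda$; thus $\sigma_p(C_\phi-\lambda I)$ is a rigid translate of $\sigma_p(C_\phi)$ and has empty interior for \emph{every} $\lambda$ as soon as $\sigma_p(C_\phi)$ does. By the contrapositive of Proposition \ref{Non-universality Schroderus} it therefore suffices to prove that $\sigma_p(C_\phi)$ has empty interior whenever $\phi$ is not a hyperbolic map, and likewise that $\sigma_p(C_\psi)$ has empty interior whenever $\psi$ is not a hyperbolic non-automorphism. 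I would then argue case by case according to the fixed-point configuration of the symbol, the identical reduction applying to the $\psi$ statement.

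For the half-plane the classification is clean: by Matache the bounded symbols are the affine maps $\psi(w)=aw+b$ with $a>0$ and $\mathrm{Re}(b)\ge 0$, and failing to be a hyperbolic non-automorphism means $a=1$ or $\mathrm{Re}(b)=0$. In either case I would transport $C_\psi$ to $L^2(\mathbb{R}_+)$ via the Paley--Wiener isometry \eqref{Paley-Weiner}. When $a=1$ the transported operator is multiplication by $t\mapsto e^{-tb}$; when $\mathrm{Re}(b)=0$ it is, after moving the finite fixed point to $0$ by a vertical-shift unitary, the dilation $g(t)\mapsto a^{-1}g(t/a)$, which the Mellin transform unitarily converts into multiplication by $\xi\mapsto a^{-1/2}a^{i\xi}$ on $L^2(\mathbb{R})$. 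In both situations $C_\psi$ is unitarily equivalent to multiplication by a non-constant continuous symbol on a non-atomic measure space; its level sets are null, so $\sigma_p(C_\psi)=\emptyset$ and the conclusion is immediate. (Alternatively one may cite the spectral computations of Schroderus and Tylli \cite{Schroderus}.) This reduction also explains the contrast with the disk: the unitary equivalence \eqref{equiv C D} carries $C_\psi$ not to a composition operator but to the \emph{weighted} operator $W_\Phi$, and it is precisely this weight that destroys the fat point spectrum seen for disk hyperbolic automorphisms.

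For the disk, a symbol $\phi$ that is not a hyperbolic map has either a fixed point inside $\mathbb{D}$ or a single (repeated) fixed point on $\mathbb{T}$. If $\phi$ has an interior fixed point $p$ with multiplier $\mu=\phi'(p)$, then the linearizing (Koenigs) eigenfunction and its powers exhibit $\{\mu^n:n\ge 0\}$ as eigenvalues; for a non-automorphism $|\mu|<1$ and $\sigma_p(C_\phi)$ is this countable set accumulating only at $0$, while for an elliptic automorphism $|\mu|=1$ and the eigenvalues lie on $\mathbb{T}$ --- empty interior in both cases. If $\phi$ is parabolic, the classical spectral theory of linear fractional composition operators (Cowen) places $\sigma_p(C_\phi)$ on the unit circle (parabolic automorphism) or on a logarithmic spiral terminating at $0$ (parabolic non-automorphism), again with empty interior. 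Assembling the cases proves the statement on $H^2(\mathbb{D})$.

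The step I expect to require the most care is the half-plane hyperbolic automorphism ($a\ne 1$, $\mathrm{Re}(b)=0$): the analogous disk map has a point spectrum with nonempty interior and universal translates, so the two cases must be kept rigorously apart and no result can be transferred naively across the Cayley transform. The Paley--Wiener/Mellin reduction to a pure multiplication operator is what makes the emptiness of $\sigma_p(C_\psi)$ transparent and blocks any spurious import of the disk phenomenon. A secondary point needing attention is to verify, in the disk parabolic and interior-fixed-point cases, that $\sigma_p(C_\phi)$ really is the small set described and not larger; for this I would rely on the established linear fractional spectral theory rather than recompute it.
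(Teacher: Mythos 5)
Your proposal is correct and follows the same skeleton as the paper's proof: both observe that $\sigma_p(C_\phi-\lambda I)=\sigma_p(C_\phi)-\lambda$, invoke Proposition \ref{Non-universality Schroderus}, and then check case by case (interior fixed point via K\"onigs, parabolic via Cowen's spectral theory) that the point spectrum has empty interior. The disk half of your argument is essentially the paper's, with one caution you yourself flag at the end: exhibiting $\{\mu^n\}$ as eigenvalues is the wrong direction of containment --- what is needed is the upper bound $\sigma_p(C_\phi)\subset\{1\}\cup\{\mu^n:n\in\mathbb{N}\}$, which (exactly as in the paper) you ultimately source from the established linear fractional spectral theory rather than from the Koenigs eigenfunctions themselves. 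Where you genuinely diverge is the half-plane case: the paper simply cites \cite[Theorem 7.4]{Eva Adjoints Dirichlet} to place $\sigma_p(C_\psi)$ on a circle or a spiral, whereas you transport $C_\psi$ through the Paley--Wiener isometry (plus a vertical-translation unitary and a Mellin transform in the hyperbolic-automorphism case) to a multiplication operator by a non-constant analytic symbol, whose level sets are Lebesgue-null, so that $\sigma_p(C_\psi)=\emptyset$. This buys a self-contained proof requiring no external spectral computation, and it is in the same spirit as the paper's own Lemmas \ref{equivalence to a weighted comp} and \ref{Equivalence to shift}, where for $\mathrm{Re}(b)=0$ the weights $c_n(t)=a^{-1/2}$ become constant and one sees an unweighted bilateral shift (which likewise has empty point spectrum) directly. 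One pedantic repair: when $\psi$ is the identity ($a=1$, $b=0$), your multiplication symbol $e^{-tb}\equiv 1$ is constant and $\sigma_p(C_\psi)=\{1\}$, not $\emptyset$; the conclusion still holds because $\{1\}$ has empty interior, but the blanket claim ``non-constant symbol, hence no eigenvalues'' should explicitly exclude this trivial case.
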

\begin{proof} We first consider the disk case. Suppose $\phi(\alpha)=\alpha$ for some $a\in\mathbb{D}$. Then the classical Königs Theorem shows that $\sigma_p(C_\phi)\subset\{1\}\cup(\lambda^n)_{n\in\mathbb{N}}$ where $\lambda=\phi'(a)$  \cite[Theorem 5.3.3]{Avendano Rosenthal} and hence $\sigma_p(C_\phi)$ has empty interior. Now suppose $\phi(\alpha)=\alpha$ for some $\alpha\in\mathbb{T}$ and $\alpha$ is the only fixed point in $\widehat{\mathbb{C}}$. If $\phi$ is an automorphism of $\mathbb{D}$ then $\sigma_p(C_\phi)=\mathbb{T}$ \cite[Theorem 5.4.6]{Avendano Rosenthal}, and if $\phi$ is a non-automorphism then $\sigma_p(C_\phi)$ is contained in a \emph{spiral-like} set $\{0\}\cup\{e^{-at}:t\in[0,\infty)]\}$ for some $a\in\mathbb{C}$ with $\mathrm{Re}(a)>0$ (see \cite{Cowen}). In both cases $\sigma_p(C_\phi)$ has empty interior. Since $\sigma_p(C_\phi-\lambda)$ is just a translate of $\sigma_p(C_\phi)$ for each $\lambda\in\mathbb{C}$, it also has empty interior. Therefore $C_\phi-\lambda$ is not universal for any $\lambda\in\mathbb{C}$ if $\phi$ is not hyperbolic.
	
	For the half-plane case suppose $\psi(w)=aw+b$ with $a=1$ or with $a\neq 1$ and $\mathrm{Re}(b)=0$. That is precisely when $\psi$ is \emph{not} a hyperbolic non-automorphism. Then $\sigma_p(C_\psi)$ is contained in a circle or a spiral (see \cite[Theorem 7.4]{Eva Adjoints Dirichlet}) and hence has empty interior. Therefore once again $C_\psi-\lambda$ is not universal for any $\lambda\in\mathbb{C}$ if $\phi$ is not a hyperbolic non-automorphism. 
	\end{proof}

In the next section we shall prove that $C_\psi-\lambda$ is universal  on $H^2(\mathbb{C}_+)$ when $\psi$ is a hyperbolic non-automorphism of type $\mathrm{II}$ with $\lambda$ in the interior of $\sigma_p(C_\psi)$. This is to our knowledge the first example of a composition operator with universal translates on $H^2(\mathbb{C}_+)$. 
\section{Universality on $H^2(\mathbb{C}_+)$} In this section we completely characterize the affine self-maps $\psi$ of $\mathbb{C}_+$ for which $C_\psi-\lambda$ is universal on $H^2(\mathbb{C}_+)$ for some $\lambda\in\mathbb{C}$. We shall see that this happens only for hyperbolic non-automorphism of type $\mathrm{II}$. We must first show that $C_\psi$ is unitarily equivalent to a weighted vector shift when $\psi$ is a hyperbolic non-automorphism. Partington and Pozzi \cite{Partington-Pozzi} have shown that a wide class of vector shift operators are universal in the sense of Caradus. Let $T:\ell^2(\mathbb{Z},L^2(t_0,t_1))\to\ell^2(\mathbb{Z},L^2(t_0,t_1))$
be the weighted \emph{right} bilateral shift given by 
\[
T(\sum_{n\in\mathbb{Z}}x_ne_n)=\sum_{n\in\mathbb{Z}}k_{n}x_{n-1}e_n
\]
where each $k_n$ is a positive continuous function on $[t_0,t_1]$ such that 

\begin{equation*}
k_n \xrightarrow{\text{uniformly}}
\begin{cases}
b & \mathrm{as} \ \ n\to -\infty \\
a & \mathrm{as} \ \ n\to +\infty \\
\end{cases}  
\end{equation*}
where $a<b$.
 We state their main result as follows. 
\begin{thm}\label{Partington-Pozzi}
 For any $\lambda\in\mathbb{C}$ with $a<|\lambda|<b$, the operator $T-\lambda I$ is a universal operator on $\ell^2(\mathbb{Z},L^2(t_0,t_1))$. Also $\sigma(T)=\sigma(T^*)=\{z\in\mathbb{C}:a\leq|z|\leq b\}$ with $\{z\in\mathbb{C}:a<|z|< b\}\subset \sigma_p(T)$ and $\sigma_p(T^*)=\emptyset$.
\end{thm}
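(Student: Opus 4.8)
The plan is to prove Theorem \ref{Partington-Pozzi} by analyzing the bilateral weighted shift $T$ through its action on the Fourier-type decomposition $\ell^2(\mathbb{Z}, L^2(t_0,t_1)) \cong L^2(\mathbb{T}, L^2(t_0,t_1))$, and then verify the Caradus criterion after an appropriate translation. Since Partington and Pozzi's result is being imported, I would reconstruct the argument in the following stages. First I would compute the spectrum. The key observation is that $T$ is asymptotically a shift with constant weights $a$ (at $+\infty$) and $b$ (at $-\infty$), so $T$ is a compact perturbation of a direct sum of two one-sided shifts (or can be compared to a block-diagonal model whose spectrum is understood). A weighted unilateral shift with weights converging to $a$ has spectral radius governed by the limiting weight, giving the disk $\{|z| \leq a\}$ for one tail and $\{|z| \leq b\}$ for the other; the bilateral structure then fills out the annulus $\{a \leq |z| \leq b\}$. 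I would make this precise by establishing that for $|\lambda| > b$ or $|\lambda| < a$ the operator $T - \lambda I$ is bounded below with dense range (hence invertible), while for $a \leq |\lambda| \leq b$ it is not.

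Next I would locate the point spectrum. For $a < |\lambda| < b$, the idea is to solve $Tx = \lambda x$ explicitly via the recursion $k_n x_{n-1} = \lambda x_n$, which gives $x_n$ in terms of $x_0$ by a product of the ratios $k_j/\lambda$. The resulting sequence decays geometrically like $(a/|\lambda|)^n$ as $n \to +\infty$ and like $(b/|\lambda|)^{-n}$ — equivalently $(|\lambda|/b)^{|n|}$ — as $n \to -\infty$, and since $a < |\lambda| < b$ both tails are square-summable in $\ell^2(\mathbb{Z}, L^2(t_0,t_1))$. This exhibits a genuine eigenvector, so $\{a < |z| < b\} \subset \sigma_p(T)$. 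For $T^*$, which is a weighted \emph{left} shift, the analogous recursion forces the eigenvector equation to have only the trivial solution in $\ell^2$ (the two decay conditions become incompatible), yielding $\sigma_p(T^*) = \emptyset$.

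For the universality claim, I would invoke the Caradus criterion from the introduction. Fix $\lambda$ with $a < |\lambda| < b$ and set $U = T - \lambda I$. Surjectivity of $U$ follows because $\lambda$ lies in the interior of the annulus, where $T - \lambda I$ has closed range equal to the whole space: one shows the cokernel is trivial using $\sigma_p(T^*) = \emptyset$ together with the closed-range estimate. For the infinite-dimensional kernel, the explicit eigenvector construction above produces, for each choice of the ``seed'' $x_0 \in L^2(t_0,t_1)$, a distinct element of $\ker(T - \lambda I)$; since $L^2(t_0,t_1)$ is infinite dimensional, $\ker(T - \lambda I)$ is infinite dimensional as well. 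With both hypotheses of Caradus verified, $U = T - \lambda I$ is universal.

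The main obstacle will be the surjectivity of $T - \lambda I$, i.e.\ showing its range is all of $\ell^2(\mathbb{Z}, L^2(t_0,t_1))$ rather than merely dense or closed. The eigenvalue recursion inverts cleanly away from the annulus, but on the interior $a < |\lambda| < b$ one must solve the inhomogeneous equation $(T - \lambda I)x = y$ for arbitrary $y$ and produce a square-summable $x$; the natural formal solution splits into a forward sum (convergent because weights approach $a < |\lambda|$) and a backward sum (convergent because weights approach $b > |\lambda|$), and the delicate point is that the transitional weights $k_n$ — which are only assumed to converge uniformly, not monotonically — still give the requisite uniform geometric bounds to guarantee $\ell^2$-summability of $x$. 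Handling these transitional terms, and confirming that the split solution genuinely lies in the space, is where the careful estimation must be carried out.
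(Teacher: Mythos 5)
The paper contains no proof of Theorem \ref{Partington-Pozzi}: it is quoted as the main result of Partington and Pozzi \cite{Partington-Pozzi}, so the only meaningful comparison is with the original source, and your sketch reconstructs essentially that argument. The load-bearing points are all present and correct: the recursion $k_nx_{n-1}=\lambda x_n$ with an arbitrary seed $x_0\in L^2(t_0,t_1)$ produces tails of magnitude roughly $\bigl((a+\epsilon)/|\lambda|\bigr)^n$ forward and $\bigl(|\lambda|/(b-\epsilon)\bigr)^{|n|}$ backward, both square-summable exactly when $a<|\lambda|<b$; this simultaneously gives $\{a<|z|<b\}\subset\sigma_p(T)$ and the infinite-dimensionality of $\ker(T-\lambda I)$, which is precisely why the weights are operator-valued rather than scalar (a scalar bilateral shift has one-dimensional eigenspaces, and the Caradus criterion could never apply). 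The incompatible decay conditions for the adjoint recursion give $\sigma_p(T^*)=\emptyset$, hence dense range of $T-\lambda I$; and you correctly identify surjectivity as the real content: the inhomogeneous equation $(T-\lambda I)x=y$ is solved by splitting $y$ into a forward-tail part (forward-summed solution, geometric ratio $(a+\epsilon)/|\lambda|<1$), a backward-tail part (backward-summed solution, ratio $|\lambda|/(b-\epsilon)<1$), and finitely many transitional slots whose weights are bounded above and below and so only contribute a constant; uniform convergence of $k_n$ makes every bound uniform in $t\in[t_0,t_1]$, so the solution lies in $\ell^2(\mathbb{Z},L^2(t_0,t_1))$ and Caradus applies.

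One assertion in your spectrum step is wrong as stated: in this vector-valued setting $T$ is \emph{not} a compact perturbation of a model shift. The difference between $T$ and the shift with frozen weights is block diagonal with $n$-th block equal to multiplication by $k_n-a$ (or $k_n-b$) on $L^2(t_0,t_1)$; the block norms tend to zero, but each nonzero block is a non-compact multiplication operator, and a block-diagonal operator is compact only if every block is. (The claim is true fiberwise, for the scalar shifts with weights $k_n(t)$ at fixed $t$, which is perhaps the source of the slip.) Fortunately you never need it: since $\{a<|z|<b\}\subset\sigma_p(T)$ and the spectrum is closed, the inclusion $\{a\le|z|\le b\}\subset\sigma(T)$ is automatic, and the reverse inclusion requires only invertibility of $T-\lambda I$ for $|\lambda|<a$ and $|\lambda|>b$, which the same two geometric series deliver (for $|\lambda|>b$ the forward-summed series converges globally, for $|\lambda|<a$ the backward-summed one does, with the finitely many middle weights again absorbed into a constant); finally $\sigma(T^*)=\{\bar z:z\in\sigma(T)\}$ is the same annulus by symmetry. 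So your proposal is sound once the compact-perturbation remark is deleted in favor of the precise plan you in fact state.
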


We begin by showing that non-automorphic $C_\psi$ are unitarily equivalent to a dilation followed by a multiplication operator on $L^2(\mathbb{R}_+)$. The case $a=1$ already appears in \cite[Thm. 7.1]{Eva Adjoints Dirichlet}.
\begin{lem}\label{equivalence to a weighted comp} Let $a\in(0,\infty)$, $b\in\mathbb{C_+}$ and $\psi:\mathbb{C}_+\to \mathbb{C}_+$ be the symbol $\psi(w)=a w + b$. Then the composition operator $C_{\psi}:H^2(\mathbb{C}_+)\rightarrow H^2(\mathbb{C}_+)$ is unitarily equivalent to the operator $W:L^2(\mathbb{R}_+)\rightarrow L^2(\mathbb{R}_+)$ defined by $(Wf)(t)=\frac{1}{a}e^{-bt/a}f(t/a)$.
	\end{lem}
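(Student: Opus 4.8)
The plan is to conjugate $C_\psi$ by the Paley--Wiener transform $P$ of \eqref{Paley-Weiner} and verify that the resulting operator on $L^2(\mathbb{R}_+)$ is exactly $W$. Since $P$ is an isometric isomorphism of $L^2(\mathbb{R}_+)$ onto $H^2(\mathbb{C}_+)$, establishing the intertwining identity $C_\psi P = P W$ immediately yields the claimed unitary equivalence with $P$ as the implementing unitary.

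First I would take $f$ in a convenient dense subspace of $L^2(\mathbb{R}_+)$ (for instance $L^1\cap L^2$) so that the Laplace-type integral $(Pf)(w)=\int_0^\infty f(t)e^{-tw}\,dt$ converges absolutely and all manipulations are justified. Applying $C_\psi$ and substituting $\psi(w)=aw+b$ gives $(C_\psi Pf)(w)=\int_0^\infty f(t)e^{-tb}e^{-taw}\,dt$. The key step is the change of variables $s=at$ (so $t=s/a$, $dt=ds/a$), which converts the integral into $\int_0^\infty \frac{1}{a}e^{-bs/a}f(s/a)e^{-sw}\,ds$. I would then recognize this as $(P(Wf))(w)$ with $(Wf)(s)=\frac{1}{a}e^{-bs/a}f(s/a)$, giving $C_\psi P=PW$ on the dense subspace.

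Before concluding I would verify that $W$ is a bounded operator on $L^2(\mathbb{R}_+)$, which follows from a routine computation: since $\mathrm{Re}(b)\geq 0$, substituting $u=s/a$ shows $\|Wf\|^2=\frac{1}{a}\int_0^\infty e^{-2\mathrm{Re}(b)u}|f(u)|^2\,du\leq \frac{1}{a}\|f\|^2$. Boundedness of both $W$ and $C_\psi$ (the latter guaranteed by Matache's criterion for $a>0$, $\mathrm{Re}(b)\geq 0$) lets me extend the identity $C_\psi P=PW$ from the dense subspace to all of $L^2(\mathbb{R}_+)$ by continuity. Rearranging as $W=P^{-1}C_\psi P$ completes the proof.

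The main obstacle is technical rather than conceptual: making the formal change of variables rigorous in the $L^2$ setting, where $Pf$ is \emph{a priori} only an $L^2$-limit rather than a genuine convergent integral. Working first on the dense subspace where $Pf$ is given by an absolutely convergent integral, and only afterwards passing to the limit using the boundedness established above, is precisely what sidesteps this difficulty.
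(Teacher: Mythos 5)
Your proposal is correct and follows essentially the same route as the paper: conjugation by the Paley--Wiener transform $P$ together with the change of variables $s=at$ to obtain the intertwining identity $C_\psi P = PW$. Your dense-subspace detour is harmless but unnecessary, since for each fixed $w\in\mathbb{C}_+$ the kernel $t\mapsto e^{-tw}$ lies in $L^2(\mathbb{R}_+)$, so by Cauchy--Schwarz the integral $(Pf)(w)$ converges absolutely for \emph{every} $f\in L^2(\mathbb{R}_+)$, which is why the paper computes directly on all of $L^2(\mathbb{R}_+)$.
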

\begin{proof} By the Paley-Wiener theorem, the map 
	$P : L^2(\mathbb{R}_+)\rightarrow H^2(\mathbb{C}_+)$ defined by
\[
(Pf)(w)=\int_{\mathbb{R}_+}f(t)e^{-tw}dt
\]
is an isometric isomorphism.
Let $f \in L^2(\mathbb{R}_+)$ and $F:=P(f)\in H^2(\mathbb{C}_+)$. We get 
\begin{align*}
(C_{\psi} Pf)(w)&=C_{\psi}F(w)=F(aw +b)=\int_{\mathbb{R}_+}f(t)e^{-t(a w + b)}dt=\int_{\mathbb{R}_+}f(t)e^{-bt}e^{-atw}dt \\
&=\int_{\mathbb{R}_+}\frac{1}{a}e^{-bt/a}f(t/a)e^{-tw}dt=(PWf)(w).
\end{align*}
Hence $C_\psi$ on $H^2(\mathbb{C}_+)$ is unitarily equivalent to $W$ on $L^2(\mathbb{R}_+)$.
\end{proof}

The next result shows that the operator $W$ with $a\neq 1$ is unitarily equivalent to a weighted vector \emph{left} shift.

\begin{lem}\label{Equivalence to shift} For $a \in (0,1) \cup (1,+\infty)$, the operator  $W:L^2(\mathbb{R}_+)\rightarrow L^2(\mathbb{R}_+)$ defined by $(Wf)(t)=\frac{1}{a}e^{-bt/a}f(t/a)$ is unitarily equivalent to the weighted left bilateral shift $T$ defined by  
	\[
	T(\sum_{n\in\mathbb{Z}}g_ne_n)=\sum_{n\in\mathbb{Z}}c_{n}g_{n+1}e_n
	\]
	 on $l^2(\mathbb{Z}, L^2[1,a])$ for $a>1$ (resp. $l^2(\mathbb{Z},  L^2[a, 1])$  for $a<1$), and where  \[c_n(t):=a^{-1/2}e^{-Re(b)a^{-n-1}t}\]
	 are positive and continuous functions on $[1,a]$ (resp. $[a,1]$).
\end{lem}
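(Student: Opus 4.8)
The plan is to explicitly construct a unitary $U : L^2(\mathbb{R}_+) \to \ell^2(\mathbb{Z}, L^2[1,a])$ (for $a>1$) that diagonalizes the multiplicative structure of $W$ into the shift structure of $T$, by partitioning $\mathbb{R}_+$ into the dyadic-type blocks determined by the dilation $t \mapsto t/a$. Since $W$ dilates the argument by the factor $a$, the natural idea is that the intervals $[a^n, a^{n+1})$ for $n \in \mathbb{Z}$ tile $(0,\infty)$, and $W$ maps functions supported on one block to functions supported on the adjacent block. First I would define $U$ block-by-block: for $f \in L^2(\mathbb{R}_+)$, set the $n$-th coordinate $(Uf)_n \in L^2[1,a]$ to be the restriction of $f$ to $[a^n, a^{n+1})$ rescaled back to $[1,a]$, with an appropriate Jacobian weight $a^{n/2}$ to preserve the $L^2$ norm. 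Concretely $(Uf)_n(t) = a^{n/2} f(a^n t)$ for $t \in [1,a]$, and I would verify that $U$ is isometric and onto via the change of variables $s = a^n t$.

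Next I would compute $U W U^{-1}$ directly. Starting from $(Wf)(s) = \tfrac{1}{a} e^{-bs/a} f(s/a)$, I would substitute $s = a^n t$ into the definition of $(U(Wf))_n(t) = a^{n/2}(Wf)(a^n t)$, which yields $a^{n/2} \cdot \tfrac{1}{a} e^{-b a^{n-1} t} f(a^{n-1} t)$. Rewriting $f(a^{n-1}t)$ in terms of the $(n-1)$-th coordinate of $Uf$, namely $(Uf)_{n-1}(t) = a^{(n-1)/2} f(a^{n-1}t)$, I extract $f(a^{n-1}t) = a^{-(n-1)/2}(Uf)_{n-1}(t)$. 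Collecting the powers of $a$ gives a weight $a^{-1/2} e^{-\mathrm{Re}(b) a^{n-1} t}$ acting on the $(n-1)$-th coordinate and depositing the result in the $n$-th slot, which is precisely the left-shift action $T$ with weights $c_n(t) = a^{-1/2} e^{-\mathrm{Re}(b) a^{-n-1} t}$ after re-indexing. The only subtlety here is bookkeeping the sign and placement of the exponents $a^{n-1}$ versus $a^{-n-1}$; I expect the stated $c_n$ arises from a relabeling of the block index (e.g. replacing $n$ by $-n$), which simply reflects whether one tiles with $[a^n, a^{n+1})$ or $[a^{-n-1}, a^{-n})$.

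For the case $a < 1$, the dilation $t \mapsto t/a$ now expands rather than contracts, so the blocks $[a^{n+1}, a^n)$ tile $(0,\infty)$ in the reverse order, and the same computation runs through with the roles adjusted so that the model space becomes $L^2[a,1]$; I would handle this either by the symmetric construction or by noting that replacing $a$ with $1/a$ interchanges the two cases. The main obstacle I anticipate is not the algebra but ensuring the index conventions match the statement exactly: the exponential weight in $W$ depends on the position $s = a^n t$, and after the shift the surviving exponent must be expressed as a continuous function of $t \in [1,a]$ alone with the correct dependence on $n$. I would verify at the end that each $c_n$ is indeed positive and continuous on $[1,a]$ (immediate, since it is an exponential of a real quantity) and that $c_n$ depends on $n$ only through the factor $a^{-n-1}$, which is exactly what the subsequent convergence hypothesis $c_n \to$ constant in Theorem \ref{Partington-Pozzi} will require.
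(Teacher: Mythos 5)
Your block decomposition is exactly the one the paper uses, and the isometry/surjectivity argument for $U$ is fine, but there is a genuine gap at the decisive step: the weights your $U$ actually produces are \emph{complex}, not the positive weights $c_n$ of the statement. Carrying out your own computation honestly, $(U(Wf))_n(t) = a^{n/2}\cdot\tfrac{1}{a}\,e^{-b a^{n-1}t}f(a^{n-1}t) = a^{-1/2}e^{-b a^{n-1}t}(Uf)_{n-1}(t)$; the exponent contains the full complex parameter $b$, not $\mathrm{Re}(b)$. When $\mathrm{Im}(b)\neq 0$ the factor $e^{-ba^{n-1}t}=e^{-\mathrm{Re}(b)a^{n-1}t}e^{-i\,\mathrm{Im}(b)a^{n-1}t}$ is a non-constant unimodular multiple of the claimed weight, so writing $\mathrm{Re}(b)$ at this point is not index bookkeeping (your remark about $a^{n-1}$ versus $a^{-n-1}$ and right versus left shift is correct and harmless); it is an unproved, and for the plain rescaling $U$ false, identity. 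Since the entire purpose of the lemma is to produce \emph{positive continuous} weights --- positivity of the $k_n$ is a hypothesis of Theorem \ref{Partington-Pozzi}, which is applied immediately afterwards --- this cannot be absorbed into a relabeling.

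The missing idea is a further conjugation by a diagonal unitary that absorbs the oscillatory factors. Concretely, one must compose $U$ with $D=\bigoplus_{n\in\mathbb{Z}}d_n$, where each $d_n$ is multiplication on the block by a unimodular function chosen so that consecutive ratios satisfy $d_n(t)/d_{n+1}(t)=e^{it\,\mathrm{Im}(b)a^{-n-1}}$ (in the left-shift indexing); such $d_n$ exist by an explicit recursion, and conjugating the shift by $D$ multiplies the $n$-th weight by exactly this ratio, cancelling $e^{-i\,\mathrm{Im}(b)a^{-n-1}t}$ and leaving $c_n(t)=a^{-1/2}e^{-\mathrm{Re}(b)a^{-n-1}t}$. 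This is precisely what the paper's proof does: its unitary is not the plain rescaling but $h_n(t)=a^{-n/2}a_n(t)f(t/a^n)$, with explicit unimodular functions $a_n(t)$ satisfying $(a_n/a_{n+1})(t)=e^{it\,\mathrm{Im}(b)a^{-n-1}}$, and the whole point of introducing the $a_n$ is the cancellation you skipped. Everything else in your proposal (the tiling, the Jacobian weight $a^{n/2}$, the reduction of the case $a<1$ to the same computation) matches the paper; you should repair the proof by building these unimodular twists into your $U$ from the start.
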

\begin{proof} Let $f\in L^2(\mathbb{R}_+)$ and consider first the case $a>1$. Then after a change of variables one obtains
\begin{align}\label{isometry equation}
 ||f||^2_{L^2(\mathbb{R}_+)}&=\int_{\mathbb{R}_+}|f(t)|^2dt
=\sum_{n \in \mathbb{Z}}\int_{a^{-n}}^{a^{-n+1}}|f(t)|^2dt
=\sum_{n \in \mathbb{Z}}\int_{1}^{a}a^{-n}|f(t/a^{n})|^2dt \nonumber\\
&=\sum_{n \in\mathbb{Z}}\int_{1}^{a}|a^{-n/2}f(t/a^{n})|^2dt 
\end{align}
Define a sequence of unimodular functions by
\begin{equation*}
a_n(t): =
\begin{cases}
\prod_{k=0}^n e^{-itIm(b)a^{-k}} & \mathrm{if} \ \ n\geq 0 \\
e^{-itIm(b)}\prod_{k=0}^{-n-1}e^{itIm(b)a^{k}} & \mathrm{if} \ \ n<0
\end{cases}       
\end{equation*}
and note that $(a_n/a_{n+1})(t)=e^{itIm(b)a^{-n-1}}$ for all $n\in\mathbb{Z}$. Define the operator $\Psi:L^2(\mathbb{R}_+)\rightarrow l^2(\mathbb{Z}, L^2[1,a])$ by 
\[
\Psi (f)=\sum_{n \in \mathbb{Z}}h_ne_n
\]
 where $h_n(t)=a^{-n/2} a_n(t) f(t/a^{n})$ for $t\in[1,a]$ and $e_n$ a canonical basis vector. Then \eqref{isometry equation} and $|a_n(t)|=1$ show that
 \[
 ||\Psi(f)||_{\ell^2}^2=\sum_{n\in\mathbb{Z}}||h_n||_{L^2[1,a]}^2=||f||^2_{L^2(\mathbb{R}_+)}
 \]
for each $f\in L^2(\mathbb{R_+})$ and so $\Psi$ is an isometry. For surjectivity let $H=\sum_{n \in \mathbb{Z}}h_ne_n$ with $\sum_{n\in\mathbb{Z}}||h_n||_{L^2[1,a]}^2<\infty$. If we define $f\in L^2(\mathbb{R}_+)$ by $f(t)=a^{n/2}h_n(a^nt)/a_n(a^nt)$ for $t\in[a^{-n},a^{-n+1}]$ noting that $\mathbb{R}_+=\bigcup_{n\in\mathbb{Z}}[a^{-n},a^{-n+1}]$, then $\Psi(f)=H$ and therefore $\Psi$ is unitary. Hence for $f\in L^2(\mathbb{R_+})$, we get
\begin{align*}
	(\Psi\circ W)f&= \Psi\left(a^{-1}e^{-bt/a}f(t/a)\right) = \sum_{n\in\mathbb{Z}}a^{-1}e^{-bta^{-n-1}}a^{-n/2}a_nf(t/a^{n+1})e_n\\
	&=\sum_{n\in\mathbb{Z}}a^{-1/2}e^{-bta^{-n-1}}\frac{a_n}{a_{n+1}}h_{n+1}e_n\\
	&=\sum_{n\in\mathbb{Z}}a^{-1/2}e^{-bta^{-n-1}}e^{itIm(b)a^{-n-1}}h_{n+1}e_n\\
	&=\sum_{n\in\mathbb{Z}}a^{-1/2}e^{-Re(b)ta^{-n-1}}h_{n+1}e_n\\
	&=\sum_{n\in\mathbb{Z}}c_nh_{n+1}e_n=T(\sum_{n\in\mathbb{Z}}h_{n}e_n)=(T\circ \Psi) f.
\end{align*}
Therefore $W$ is unitarily equivalent to $T$ when $a>1$. The case $0<a<1$ is analogous with the only changes being to replace $l^2(\mathbb{Z}, L^2[1,a])$ by $l^2(\mathbb{Z}, L^2[a,1])$ and equation \eqref{isometry equation} which becomes
\begin{align*}
||f||^2_{L^2(\mathbb{R}_+)}&
=\sum_{n \in \mathbb{Z}}\int_{a^{n+1}}^{a^{n}}|f(t)|^2dt
=\sum_{n \in \mathbb{Z}}\int_{a}^1a^{n}|f(ta^{n})|^2dt 
\\
&=\sum_{n \in\mathbb{Z}}\int_{a}^1|a^{n/2}f(ta^{n})|^2dt=\sum_{n\in\mathbb{Z}}||h_n||_{L^2[a,1]}^2 .
\end{align*}
The functions $a_n$, $h_n$ and operator $\Psi:L^2(\mathbb{R}_+)\rightarrow l^2(\mathbb{Z}, L^2[a,1])$ are defined just as in the previous case and the rest of the proof follows verbatim.
\end{proof}

We therefore arrive at the main result. As a by-product we get the spectra and point spectra of hyperbolic non-automorphic composition operators which have been obtained recently by Schroderus \cite{Schroderus spectra}.

\begin{thm}\label{Universal Comp halfplane} Let $a\in(0,1)\cup(1,\infty)$, $b\in\mathbb{C_+}$ and $\psi$ a hyperbolic non-automorphism defined on $\mathbb{C}_+$ by $\psi(w)=aw + b$. For $a>1$, the operator $C_\psi-\lambda$ is universal on $H^2(\mathbb{C}_+)$ for $0<|\lambda|<a^{-1/2}$. If $a<1$ then  $C_\psi-\lambda $ is not universal for any $\lambda\in\mathbb{C}$. Furthermore $\{\lambda\in\mathbb{C}:0<|\lambda|<a^{-1/2}\}\subset\sigma_p(C_\psi)$ if $a>1$ and $\sigma_p(C_\psi)=\emptyset$ if $a<1$. In both cases $\sigma(C_\psi)=\{\lambda\in\mathbb{C}:|\lambda|\leq a ^{-1/2}\}$.
\end{thm}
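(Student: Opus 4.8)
The plan is to transfer everything through the unitary equivalences already established. By Lemmas \ref{equivalence to a weighted comp} and \ref{Equivalence to shift}, $C_\psi$ is unitarily equivalent to the weighted \emph{left} bilateral shift $T$ on $\ell^2(\mathbb{Z},L^2[1,a])$ (resp.\ $\ell^2(\mathbb{Z},L^2[a,1])$) with weights $c_n(t)=a^{-1/2}e^{-\mathrm{Re}(b)a^{-n-1}t}$. Since $\psi$ is a hyperbolic non-automorphism we have $\mathrm{Re}(b)>0$, so on the relevant compact $t$-interval (bounded away from $0$ and $\infty$) the exponent tends uniformly to $0$ or to $-\infty$; hence $c_n$ converges uniformly to $a^{-1/2}$ and to $0$ as $n\to\pm\infty$, the two limits being attained at opposite ends depending on the sign of $a-1$. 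The whole theorem then reduces to matching these limits against the hypotheses of Theorem \ref{Partington-Pozzi}, which is stated for a \emph{right} shift. The one genuine subtlety is that $T$ is a left shift, and there are two natural ways to turn it into a right shift, namely a coordinate flip and passage to the adjoint, of which exactly one fits the theorem in each case.

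For $a>1$ I would use the flip $J(\sum g_ne_n)=\sum g_{-n}e_n$, which is unitary and for which a direct computation gives $JTJ(\sum g_ne_n)=\sum c_{-n}g_{n-1}e_n$, i.e.\ a right shift with weights $\tilde k_n(t)=c_{-n}(t)=a^{-1/2}e^{-\mathrm{Re}(b)a^{n-1}t}$. For $a>1$ these satisfy $\tilde k_n\to 0$ as $n\to+\infty$ and $\tilde k_n\to a^{-1/2}$ as $n\to-\infty$, so Theorem \ref{Partington-Pozzi} applies with its parameters $a,b$ played by $0$ and $a^{-1/2}$. This yields universality of $JTJ-\lambda I$ for $0<|\lambda|<a^{-1/2}$, the inclusion $\{0<|z|<a^{-1/2}\}\subset\sigma_p(JTJ)$, and $\sigma(JTJ)=\{|z|\le a^{-1/2}\}$. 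As $J$ is unitary and $C_\psi\cong T\cong JTJ$, unitary equivalence preserves spectrum and point spectrum and each statement transfers verbatim to $C_\psi$, giving the $a>1$ half of the theorem.

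For $a<1$ the flip produces the \emph{opposite} ordering of the limits, so instead I would pass to the adjoint: a similar computation shows $T^*(\sum h_ne_n)=\sum c_{n-1}h_{n-1}e_n$ is a right shift with weights $k_n(t)=c_{n-1}(t)=a^{-1/2}e^{-\mathrm{Re}(b)a^{-n}t}$, and now for $a<1$ one has $k_n\to 0$ as $n\to+\infty$ and $k_n\to a^{-1/2}$ as $n\to-\infty$. Thus $T^*$ satisfies Theorem \ref{Partington-Pozzi} with parameters $0$ and $a^{-1/2}$, whence $\sigma_p(T)=\sigma_p((T^*)^*)=\emptyset$ and $\sigma(T)=\sigma(T^*)=\{|z|\le a^{-1/2}\}$ (using the theorem's assertion that a right shift and its adjoint share the same spectrum). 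Transferring through $C_\psi\cong T$ gives $\sigma_p(C_\psi)=\emptyset$ and $\sigma(C_\psi)=\{|z|\le a^{-1/2}\}$; since $\sigma_p(C_\psi-\lambda)$ is then empty for every $\lambda$, Proposition \ref{Non-universality Schroderus} forces $C_\psi-\lambda$ to be non-universal.

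The main obstacle, and the place to be careful, is precisely this asymmetry: one must verify that the flip is the correct reduction for $a>1$ while the adjoint is correct for $a<1$, and keep the direction of the two weight limits straight throughout, since interchanging them would wrongly predict universality in the $a<1$ regime. Everything else — the uniform convergence of the weights (which relies only on $\mathrm{Re}(b)>0$ and the compactness of the $t$-interval), the preservation of spectra and point spectra under unitary equivalence, and the equality of the spectra of a right shift and its adjoint — is routine once the right reduction is in place.
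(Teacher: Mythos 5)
Your proposal is correct and follows essentially the same route as the paper's proof: reduce via Lemmas \ref{equivalence to a weighted comp} and \ref{Equivalence to shift} to the weighted left shift, pass to the flipped (reversed-weight) right shift for $a>1$ and to the adjoint right shift for $a<1$, apply Theorem \ref{Partington-Pozzi}, and invoke Proposition \ref{Non-universality Schroderus} together with $\sigma_p(C_\psi)=\emptyset$ to rule out universality when $a<1$. The only difference is that you write out explicitly the flip conjugation $JTJ$ and the adjoint computation, which the paper states as an observation.
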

\begin{proof} By Lemma \ref{equivalence to a weighted comp} and Lemma \ref{Equivalence to shift} we see that $C_\psi$ is unitarily equivalent to the weighted left bilateral shift with weights
\[
c_n(t)=a^{-1/2}e^{-Re(b)a^{-n-1}t}.
\]
In order to apply Theorem \ref{Partington-Pozzi} we must first transform these \emph{left} shifts into \emph{right} shifts. We observe that any such left shift is unitarily equivalent to the corresponding right shift with reversed weights 
\[
\widetilde{c_n}(t):=c_{-n}(t)=a^{-1/2}e^{-Re(b)a^{n-1}t}
\]
and its adjoint is equivalent to the right shift with the original weights $(c_n)_{n\in\mathbb{N}}$. Hence when $a>1$, we see that $\widetilde{c_n}\To 0$ as $n\to \infty$
 and $\widetilde{c_n}\To a^{-1/2} $ as $n\to -\infty$	uniformly on
 $[1,a]$. So $C_\psi-\lambda $ is universal for $0<|\lambda|<a^{-1/2}$ by Theorem \ref{Partington-Pozzi}. For the case $a<1$, we have $c_n\To 0$ as $n\to \infty$ and $c_n\To a^{-1/2}$ as $n\to -\infty$ uniformly on $[a,1]$ which implies that $C_\psi^*-\lambda$ is universal for $0<|\lambda|<a^{-1/2}$ and $\sigma_p(C_\psi)=\sigma_p((C^{*}_\psi)^*)=\emptyset$. Therefore $C_\psi-\lambda $ is not universal for any $\lambda\in\mathbb{C}$ by Proposition \ref{Non-universality Schroderus}. The statements on the spectrum and point spectrum follow from Theorem \ref{Partington-Pozzi}.
\end{proof}

Therefore by Theorem \ref{Universal Comp halfplane} and Proposition \ref{Non-universality} we obtain our desired characterization of universality on $H^2(\mathbb{C}_+).$

\begin{thm}\label{Main half-plane result} If $\psi$ is an affine self-map of $\mathbb{C}_+$, then $C_\psi-\lambda$ is universal on $H^2(\mathbb{C}_+)$ for some $\lambda\in\mathbb{C}$ if and only if $\psi$ is a hyperbolic non-automorphism of type $\mathrm{II}$. 
\end{thm}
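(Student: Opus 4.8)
The plan is to deduce this characterization directly from Theorem \ref{Universal Comp halfplane} and Proposition \ref{Non-universality}, which together partition all affine self-maps of $\mathbb{C}_+$ into exhaustive and mutually exclusive cases. Recall that an affine self-map has the form $\psi(w)=aw+b$ with $a>0$ and $\mathrm{Re}(b)\geq 0$, and that such a map fails to be a hyperbolic non-automorphism precisely when either $a=1$ or $\mathrm{Re}(b)=0$. Thus the argument reduces to checking the two implications of the stated equivalence against these two results.

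For the sufficiency direction, suppose $\psi$ is a hyperbolic non-automorphism of type $\mathrm{II}$, so that $a\in(1,\infty)$ and $\mathrm{Re}(b)>0$. Then Theorem \ref{Universal Comp halfplane} asserts that $C_\psi-\lambda$ is universal on $H^2(\mathbb{C}_+)$ for every $\lambda$ in the punctured disk $0<|\lambda|<a^{-1/2}$, which is nonempty. Hence there exists $\lambda\in\mathbb{C}$ with $C_\psi-\lambda$ universal, as required.

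For the necessity direction, suppose $C_\psi-\lambda$ is universal for some $\lambda\in\mathbb{C}$. By Proposition \ref{Non-universality}, an affine $\psi$ that is not a hyperbolic non-automorphism never yields a universal translate, so $\psi$ must be a hyperbolic non-automorphism, i.e. $a\neq 1$ and $\mathrm{Re}(b)>0$. It remains only to exclude type $\mathrm{I}$. But Theorem \ref{Universal Comp halfplane} shows that when $a\in(0,1)$ the operator $C_\psi-\lambda$ is not universal for any $\lambda$ (indeed $\sigma_p(C_\psi)=\emptyset$ in that case, so universality is ruled out by Proposition \ref{Non-universality Schroderus}). Therefore $a\in(1,\infty)$ and $\psi$ is of type $\mathrm{II}$, completing the equivalence.

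No genuine obstacle arises here: all the analytic content has already been extracted in Lemmas \ref{equivalence to a weighted comp} and \ref{Equivalence to shift} and in Theorem \ref{Universal Comp halfplane}, so the only thing requiring care is bookkeeping. One should verify that the trichotomy ``$a=1$ or $\mathrm{Re}(b)=0$'', ``$a\in(0,1)$ with $\mathrm{Re}(b)>0$'', and ``$a\in(1,\infty)$ with $\mathrm{Re}(b)>0$'' exhausts all affine self-maps, and that the two cited results dispose of exactly the first two classes while confirming universality for the third.
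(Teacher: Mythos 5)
Your proposal is correct and takes essentially the same route as the paper, which obtains the theorem in one sentence by combining Theorem \ref{Universal Comp halfplane} (universality for type $\mathrm{II}$, non-universality for type $\mathrm{I}$) with Proposition \ref{Non-universality} (non-universality when $\psi$ is not a hyperbolic non-automorphism). Your write-up merely makes explicit the trichotomy on $a$ and $\mathrm{Re}(b)$ that the paper leaves implicit.
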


\section{Universality on $H^2(\mathbb{D})$}
In this section we completely characterize the linear fractional self-maps $\phi$ of $\mathbb{D}$ for which $C_\phi-\lambda$ is universal on $H^2(\mathbb{D})$ for some $\lambda\in\mathbb{C}$. Recall that $\phi$ is hyperbolic if it has two distinct fixed points outside $\mathbb{D}$ and a hyperbolic automorphism when both fixed points belong to the unit circle $\mathbb{T}$. At least one fixed point must necessarily belong to $\mathbb{T}$. The main result is the following.
\begin{thm}\label{Hyperbolic comp universal D}
If $\phi$ is a linear fractional self-map of $\mathbb{D}$, then $C_\phi-\lambda$ is universal on $H^2(\mathbb{D})$ for some $\lambda\in\mathbb{C}$ if and only if $\phi$ is hyperbolic.
\end{thm}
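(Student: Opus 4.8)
The plan is to treat the two implications separately, with essentially all of the content residing in the sufficiency direction. For the necessity (``only if'') direction there is nothing to add: Proposition~\ref{Non-universality} already shows that if $\phi$ is not hyperbolic then $C_\phi-\lambda$ fails to be universal for every $\lambda\in\mathbb{C}$. For the converse I would split the hyperbolic maps according to their fixed points. Since at least one fixed point of a hyperbolic $\phi$ lies on $\mathbb{T}$, either both fixed points lie on $\mathbb{T}$ (hyperbolic automorphism) or exactly one lies on $\mathbb{T}$ while the other lies in $\widehat{\mathbb{C}}\setminus\overline{\mathbb{D}}$ (hyperbolic non-automorphism). The automorphism case is the classical Nordgren--Rosenthal--Wintrobe theorem: a rotation reduces $\phi$ to the canonical symbol $h$ of \eqref{Canonical Auto}, and one cites their result. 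All the new work is in the non-automorphism case.

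For the key special case of the affine symbol $\phi_a(z)=az+(1-a)$ with $0<a<1$, I would pass to the half-plane through the unitary equivalence \eqref{equiv C D} between $C_\psi$ on $H^2(\mathbb{C}_+)$ and the weighted composition operator $W_\Phi$ on $H^2(\mathbb{D})$. The decisive observation is that the weight $\tfrac{1-\Phi(z)}{1-z}$ degenerates to a \emph{constant} precisely when $\Phi=\phi_a$: indeed $1-\phi_a(z)=a(1-z)$, so that $W_{\phi_a}=aC_{\phi_a}$. Conjugating $\phi_a$ by the Cayley transform produces the half-plane symbol $\psi(w)=\tfrac1a w+\tfrac{1-a}{a}$, which is a hyperbolic non-automorphism of type $\mathrm{II}$ since $1/a>1$ and $\mathrm{Re}\big(\tfrac{1-a}{a}\big)>0$. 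Hence $C_{\phi_a}$ is unitarily equivalent to $\tfrac1a C_\psi$, and Theorem~\ref{Universal Comp halfplane} makes $C_\psi-\mu$ universal for $0<|\mu|<a^{1/2}$. As universality is preserved under nonzero scalar multiples and under similarity, this yields that $C_{\phi_a}-\lambda$ is universal for $0<|\lambda|<a^{-1/2}$.

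It remains to reduce an arbitrary hyperbolic non-automorphism $\phi$ to this affine model. Write $\eta\in\mathbb{T}$ for its Denjoy--Wolff point, $\zeta\in\widehat{\mathbb{C}}\setminus\overline{\mathbb{D}}$ for the repelling fixed point, and $a=\phi'(\eta)\in(0,1)$. I would construct a disk automorphism $\tau$ with $\tau(1)=\eta$ and $\tau(\infty)=\zeta$: a rotation first moves $\eta$ to $1$, and then an automorphism fixing $1$ is chosen to send $\infty$ to the required exterior point; in half-plane coordinates such an automorphism is simply $w\mapsto\alpha w+i\beta$ with $\alpha>0$, and solving $-\alpha+i\beta=\gamma(\zeta)$ (which lies in the open left half-plane) determines it. The conjugate $\tau^{-1}\circ\phi\circ\tau$ then fixes both $1$ and $\infty$, hence is affine, and since the boundary multiplier is a conjugacy invariant it must equal $\phi_a$. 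Because $\tau$ is an automorphism of $\mathbb{D}$, $C_\tau$ is bounded and invertible, so $C_\phi=C_\tau^{-1}C_{\phi_a}C_\tau$ is \emph{similar} to $C_{\phi_a}$, and universality of $C_{\phi_a}-\lambda$ transfers to $C_\phi-\lambda$.

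The step I expect to be the main obstacle is the transition from the weighted composition operator $W_\Phi$ supplied by \eqref{equiv C D} to the genuine composition operator $C_\phi$. The obvious formal intertwiner $W_\Phi=M_{1/(1-z)}\,C_\Phi\,M_{1/(1-z)}^{-1}$ is implemented by multiplication by $1/(1-z)$, which is unbounded on $H^2(\mathbb{D})$ and therefore yields no honest similarity. The way around this is exactly the two-tier argument above: first isolate the affine symbols, for which the weight is constant and the bridge to $H^2(\mathbb{C}_+)$ is clean, and then show that every hyperbolic non-automorphism is conjugate, \emph{through a genuine disk automorphism}, to such an affine model. Confirming that the conjugating map $\tau$ can indeed be taken in $\mathrm{Aut}(\mathbb{D})$---rather than being a mere linear fractional map, which would reintroduce an unbounded intertwiner---is the delicate point, and passing to half-plane coordinates is what makes it transparent.
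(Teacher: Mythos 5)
Your proposal is correct, and its skeleton is the paper's: necessity via Proposition~\ref{Non-universality}, the hyperbolic automorphism case via Nordgren--Rosenthal--Wintrobe, and the affine case exactly as in Lemma~\ref{equiv between C and D} --- the observation that the weight $\frac{1-\Phi(z)}{1-z}$ in \eqref{equiv C D} collapses to the constant $a$ when $\Phi=\phi_a$, so that $C_{\phi_a}$ is unitarily equivalent to $a^{-1}C_{\psi_a}$ with $\psi_a$ of type $\mathrm{II}$, and Theorem~\ref{Universal Comp halfplane} then gives universality of $C_{\phi_a}-\lambda$ for $0<|\lambda|<a^{-1/2}$ (your bookkeeping $0<|a\lambda|<a^{1/2}$ matches Theorem~\ref{hyperbolic nonaut univ D}). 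The one genuine divergence is the reduction of a general hyperbolic non-automorphism to the affine model: the paper simply cites Hurst \cite[Theorem 8]{hust} for the similarity of $C_\phi$ and $C_{\phi_a}$, whereas you prove it directly by producing $\tau\in\mathrm{Aut}(\mathbb{D})$ with $\tau(1)=\eta$ and $\tau(\infty)=\zeta$. Your construction is sound: in half-plane coordinates the exterior point $\zeta'$ corresponds to $\gamma(\zeta')$ in the open left half-plane, so $-\alpha+i\beta=\gamma(\zeta')$ is solvable with $\alpha>0$ (with $\zeta'=\infty$ handled by $\gamma(\infty)=-1$). The conjugate $\tau^{-1}\circ\phi\circ\tau$ fixes $1$ and $\infty$, hence has the form $z\mapsto cz+(1-c)$; being a self-map of $\mathbb{D}$ forces $|c|+|1-c|\le 1$, i.e.\ $c\in(0,1)$ real, and $c=\phi'(\eta)=a$ by conjugacy invariance of the multiplier, so the conjugate is exactly $\phi_a$ and $C_\tau$ implements an honest similarity. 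This buys a self-contained, elementary proof where the paper leans on Hurst's weighted-Hardy-space machinery, and as a by-product it shows $\phi'(\eta)\in(0,1)$ rather than assuming it; you are also right that the automorphic (as opposed to merely linear fractional) nature of $\tau$ is the crux, since the formal intertwiner $M_{1/(1-z)}$ is unbounded.

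One minor slip, not a gap: in the automorphism case a rotation alone does not in general carry the two boundary fixed points to $\pm 1$ (rotations preserve their angular separation, which need not be $\pi$), so $\phi$ is not reduced to the canonical $h$ of \eqref{Canonical Auto} by a rotation. This is harmless because \cite[Theorem 6.2]{Nordgren-Rosenthal-Wintrobe} applies to every hyperbolic automorphism directly --- which is how the paper uses it --- or, alternatively, a variant of your own $\tau$-construction (automorphisms of $\mathbb{D}$ act transitively on ordered pairs of distinct points of $\mathbb{T}$) supplies the needed automorphic conjugacy.
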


 This extends the  well-known result of Nordgren, Rosenthal and Wintrobe \cite[Theorem 6.2]{Nordgren-Rosenthal-Wintrobe} where they proved this for hyperbolic automorphisms. 
 
 So let $\phi$ be a hyperbolic non-automorphism.  Hence $\phi$ fixes one point $\zeta\in\mathbb{T}$ and the other outside the closed unit disk $\overline{\mathbb{D}}$ (possibly $\infty$). It was shown by Hurst \cite[Theorem 8]{hust} that in this case $C_\phi$ is similar to $C_{\phi_a}$ where 
 \begin{equation}\label{normalized hyperbolic nonauto}
 \phi_a(z)=az+(1-a) \ \ \mathrm{with} \ \ a:=\phi'(\zeta)\in(0,1).
 \end{equation}
The operator $C_{\phi_a}$ was studied by Deddens (see \cite{Deddens}) where it was shown that the adjoint of $C_{\phi_a}$ is subnormal and where its spectrum was determined. Interestingly $C_{\phi_a}$ is unitarily equivalent to a scalar multiple of a composition operator on $H^2(\mathbb{C}_+)$ with symbol that is a hyperbolic non-automorphism of type $\mathrm{II}$.
\begin{lem} \label{equiv between C and D}Let $a\in (0,1)$ and $\phi_a$ be the self map of $\mathbb{D}$ given by $\phi_a(z)=a z+(1-a)$. Then $C_{\phi_a}$ on $H^2(\mathbb{D})$ is unitarily equivalent to $a^{-1}C_{\psi_a}$ on $H^2(\mathbb{C}_+)$ where \[\psi_a(w)=a^{-1}w+(a^{-1}-1).\]
\end{lem}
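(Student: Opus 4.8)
The plan is to invoke the unitary equivalence \eqref{equiv C D} between composition operators on $H^2(\mathbb{C}_+)$ and weighted composition operators on $H^2(\mathbb{D})$, and then verify by a direct computation that the relevant weighted composition operator is simply a scalar multiple of $C_{\phi_a}$. According to \eqref{equiv C D}, the operator $C_{\psi_a}$ on $H^2(\mathbb{C}_+)$ is unitarily equivalent to $W_\Phi$ on $H^2(\mathbb{D})$, where $\Phi=\gamma^{-1}\circ\psi_a\circ\gamma$ and $\gamma(z)=(1+z)/(1-z)$. So the whole lemma reduces to identifying $\Phi$ and computing the weight factor $\frac{1-\Phi(z)}{1-z}$.

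First I would compute $\Phi=\gamma^{-1}\circ\psi_a\circ\gamma$. Substituting $\gamma(z)$ into $\psi_a(w)=a^{-1}w+(a^{-1}-1)$ and simplifying gives $\psi_a(\gamma(z))=\frac{z+2a^{-1}-1}{1-z}$; applying $\gamma^{-1}(w)=(w-1)/(w+1)$ then collapses, after cancellation of the common factor $1/(1-z)$, to $\Phi(z)=az+(1-a)=\phi_a(z)$. Thus the conjugated symbol is exactly $\phi_a$, which also matches the fixed-point bookkeeping: $\gamma$ carries the fixed points $1,\infty$ of $\phi_a$ to the fixed points $\infty,-1$ of $\psi_a$.

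Next I would evaluate the weight. Since $\Phi=\phi_a$ we have $1-\Phi(z)=a(1-z)$, so the weight $\frac{1-\Phi(z)}{1-z}$ is the constant $a$. Hence $W_\Phi=a\,C_{\phi_a}$, and combining with the unitary equivalence $C_{\psi_a}\cong W_\Phi$ yields $C_{\phi_a}\cong a^{-1}C_{\psi_a}$, as claimed.

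Two routine checks remain. One should confirm that $\psi_a$ genuinely induces a bounded composition operator on $H^2(\mathbb{C}_+)$, that is, that it is an affine self-map with positive leading coefficient $a^{-1}>1$ and $\mathrm{Re}(a^{-1}-1)=a^{-1}-1>0$; this is immediate from $a\in(0,1)$ and places $\psi_a$ among the hyperbolic non-automorphisms of type $\mathrm{II}$, consistent with the accompanying remark. The only real subtlety, and where I would be most careful, is the algebraic simplification in the conjugation step, where the common factor $1/(1-z)$ appearing in both $\psi_a(\gamma(z))\pm 1$ must cancel cleanly; tracking these factors correctly is precisely what makes the constant weight $a$ emerge, and a stray sign or coefficient there is the most likely way the argument could go astray.
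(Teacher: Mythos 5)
Your proposal is correct and follows essentially the same route as the paper: invoke the unitary equivalence \eqref{equiv C D}, verify by direct computation that $\Phi=\gamma^{-1}\circ\psi_a\circ\gamma=\phi_a$, and observe that the weight $\frac{1-\Phi(z)}{1-z}$ collapses to the constant $a$, giving $W_\Phi=aC_{\phi_a}$. Your intermediate simplification $\psi_a(\gamma(z))=\frac{z+2a^{-1}-1}{1-z}$ and the subsequent cancellation are both accurate, and the extra checks on boundedness and fixed points are harmless additions.
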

\begin{proof} We only have to determine \eqref{equiv C D} with $\Phi=\gamma^{-1}\circ\psi_a\circ \gamma$. We get
	\begin{align*}\label{620}
\Phi(z)&= \gamma^{-1}\left(a^{-1}\gamma(z)+(a^{-1}-1)\right)\nonumber
	= \displaystyle \frac{a^{-1}\gamma(z)+(a^{-1}-1)-1}{a^{-1}\gamma(z)+(a^{-1}-1)+1}\nonumber\\
	&= \displaystyle \frac{a^{-1}\left(\frac{1+z}{1-z}\right)+a^{-1}-2}{a^{-1}\left(\frac{1+z}{1-z}\right)+a^{-1}}= \frac{a^{-1}(1+z)+(a^{-1}-2)(1-z)}{a^{-1}(1+z)+a^{-1}(1-z)}\nonumber\\
	&=\frac{2a^{-1}-2+2z}{2a^{-1}}=a z+(1-a)=\phi_a(z).
	\end{align*}
Similarly \[\frac{1-\Phi(z)}{1-z}=\frac{1-\phi_a(z)}{1-z}=\frac{a(1-z)}{1-z}=a.\] 
Therefore we see that $C_{\psi_a}$ is unitarily equivalent to $W_\Phi=aC_{\phi_a}$ by \eqref{equiv C D}. 
\end{proof}

We are now ready to prove universality in the hyperbolic non-automorphism case, which together with the hyperbolic automorphism case (see \cite[Thm. 6.2]{Nordgren-Rosenthal-Wintrobe}) and Proposition \ref{Non-universality} proves Theorem \ref{Hyperbolic comp universal D}.

\begin{thm}\label{hyperbolic nonaut univ D}
Let $\phi$ be a hyperbolic non-automorphism with one fixed point $\zeta\in\mathbb{T}$ and the other outside the closed unit disk $\overline{\mathbb{D}}$ (possibly at $\infty$). If $a:=\phi'(\zeta)\in(0,1)$, then for each $\lambda$ with $0<|\lambda|<a^{-1/2}$ the operator $C_\phi-\lambda$ is universal on $H^2(\mathbb{D})$. 
\end{thm}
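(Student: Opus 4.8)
The plan is to reduce the statement to the half-plane result Theorem \ref{Universal Comp halfplane}, which we have already proved, by chaining the similarity from Hurst with the unitary equivalence from Lemma \ref{equiv between C and D}. The first thing I would record is that universality is stable under the three operations that occur in these reductions: similarity, unitary equivalence (a special case of similarity), and multiplication by a nonzero scalar. Indeed, if $U$ is universal and $V=SUS^{-1}$ for an invertible $S$, then given any operator $T$ there is an invariant subspace $\mathcal{M}$ for $U$ with $U|_{\mathcal{M}}$ similar to $\gamma T$; the subspace $S\mathcal{M}$ is invariant for $V$ with $V|_{S\mathcal{M}}$ similar to $U|_{\mathcal{M}}$, hence to $\gamma T$. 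Likewise, if $U$ is universal then so is $\alpha U$ for any $\alpha\neq 0$, since $(\alpha U)|_{\mathcal{M}}=\alpha(U|_{\mathcal{M}})$ is similar to $\alpha\gamma T$ with $\alpha\gamma\neq 0$. The key point is that subtracting $\lambda$ commutes with all of these, because $S(C_{\phi_a}-\lambda)S^{-1}=C_\phi-\lambda$ and $a^{-1}C_{\psi_a}-\lambda=a^{-1}(C_{\psi_a}-a\lambda)$.

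With these preliminaries, I would carry out the reduction in two steps. By Hurst's theorem \cite[Theorem 8]{hust}, $C_\phi$ is similar to $C_{\phi_a}$ with $\phi_a(z)=az+(1-a)$ and $a=\phi'(\zeta)\in(0,1)$; hence $C_\phi-\lambda$ is similar to $C_{\phi_a}-\lambda$, and it suffices to establish universality of the latter. Next, Lemma \ref{equiv between C and D} gives that $C_{\phi_a}$ is unitarily equivalent to $a^{-1}C_{\psi_a}$ on $H^2(\mathbb{C}_+)$, where $\psi_a(w)=a^{-1}w+(a^{-1}-1)$. Therefore $C_{\phi_a}-\lambda$ is unitarily equivalent to $a^{-1}C_{\psi_a}-\lambda=a^{-1}(C_{\psi_a}-a\lambda)$, and by stability under unitary equivalence and nonzero scaling its universality is equivalent to that of $C_{\psi_a}-a\lambda$.

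It then remains to apply the half-plane theorem. The symbol $\psi_a(w)=a^{-1}w+(a^{-1}-1)$ has multiplier $a^{-1}\in(1,\infty)$ and constant term $a^{-1}-1>0$, so it is a hyperbolic non-automorphism of type $\mathrm{II}$. By Theorem \ref{Universal Comp halfplane}, applied with multiplier $a^{-1}$, the operator $C_{\psi_a}-\mu$ is universal precisely for $0<|\mu|<(a^{-1})^{-1/2}=a^{1/2}$. Setting $\mu=a\lambda$ and solving $0<|a\lambda|<a^{1/2}$ yields exactly $0<|\lambda|<a^{-1/2}$. Combining the two equivalences shows that $C_\phi-\lambda$ is universal for every $\lambda$ in this range, as claimed.

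The argument is essentially bookkeeping, so there is no deep obstacle; the single point that demands care is tracking the scalar factor $a^{-1}$ through the unitary equivalence so that the admissible range contracts correctly from $0<|\mu|<a^{1/2}$ on the half-plane side to $0<|\lambda|<a^{-1/2}$ on the disk side. I would sanity-check this against the spectra: since $\sigma(C_{\psi_a})=\{|\mu|\leq a^{1/2}\}$, the scalar $a^{-1}>1$ must \emph{enlarge} the spectral radius, giving $\sigma(C_{\phi_a})=\{|\mu|\leq a^{-1/2}\}$, which is consistent with the stated radius and reassures me that the inequality is pointing the right way.
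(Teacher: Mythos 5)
Your proof is correct and follows essentially the same route as the paper: Hurst's similarity together with Lemma \ref{equiv between C and D} reduces $C_\phi-\lambda$ to $a^{-1}\left(C_{\psi_a}-a\lambda\right)$, and Theorem \ref{Universal Comp halfplane} applied to the type $\mathrm{II}$ symbol $\psi_a$ gives universality for $0<|a\lambda|<a^{1/2}$, i.e.\ $0<|\lambda|<a^{-1/2}$. The only difference is that you spell out the (standard) stability of universality under similarity and nonzero scalar multiples, which the paper's proof uses implicitly.
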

\begin{proof} By Lemma \ref{equiv between C and D} and the paragraph before it we see that $C_\phi$ on $H^2(\mathbb{D})$ is similar to $a^{-1}C_{\psi_a}$ on $H^2(\mathbb{C}_+)$. Since $\psi_a(s)=a^{-1}s+(a^{-1}-1)$ with $a^{-1}>1$, it follows that $\psi_a$ is a hyperbolic non-automorphism of type $\mathrm{II}$. Hence $C_{\psi_a}-\lambda$ is universal for $0<|\lambda|<a^{1/2}$ by Theorem \ref{Universal Comp halfplane}, and therefore $C_\phi-\lambda $ must be universal for $0<|\lambda|<a^{-1/2}$. 
\end{proof}

Since $C_\phi$ and $C_\phi-\lambda$ have the same invariant subspaces, we get the following.

\begin{cor} \label{Min inv 1-dim} Let  $\phi:\mathbb{D}\to\mathbb{D}$ be a hyperbolic map. Then the ISP has a positive solution if and only if every  minimal non-trivial invariant subspace of $C_\phi$ in $H^2(\mathbb{D})$ is one dimensional.
	\end{cor}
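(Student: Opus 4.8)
The plan is to exploit the universality already established. Since $\phi$ is hyperbolic, Theorem \ref{hyperbolic nonaut univ D} (for a non-automorphism) together with the result of Nordgren, Rosenthal and Wintrobe \cite[Thm. 6.2]{Nordgren-Rosenthal-Wintrobe} (for an automorphism) provides some $\lambda\in\mathbb{C}$ for which $U:=C_\phi-\lambda$ is universal on $H^2(\mathbb{D})$. Because $C_\phi$ and $U$ differ by a scalar, they have exactly the same invariant subspaces, and in particular the same minimal non-trivial invariant subspaces. Thus the corollary reduces to the general Rota-type principle recalled in the introduction, now applied to the concrete universal operator $U$, and I would prove the two implications separately.

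For the forward implication, assume the ISP has a positive solution and let $E$ be a minimal non-trivial invariant subspace of $C_\phi$. If $\dim E\geq 2$ I would derive a contradiction. When $E$ is finite-dimensional, $C_\phi|_E$ acts on a complex space of dimension at least two and hence has an eigenvector, producing a one-dimensional invariant subspace properly contained in $E$. When $E$ is infinite-dimensional, it is a separable Hilbert space (being a subspace of $H^2(\mathbb{D})$), so the positive solution of the ISP applied to the restriction $C_\phi|_E$ yields a non-trivial proper invariant subspace of $E$. Either way the minimality of $E$ is violated, so $\dim E=1$. Notice that this direction uses nothing about universality.

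For the reverse implication I would argue by contraposition, and this is where universality does the work. Suppose the ISP fails, so there is a bounded operator $T$ on a separable infinite-dimensional Hilbert space $\mathcal{H}$ having no non-trivial invariant subspace. By universality of $U$ there exist $\alpha\neq 0$ and an invariant subspace $\mathcal{M}$ of $U$ with $U|_{\mathcal{M}}$ similar to $\alpha T$. Since similarity preserves both the dimension of the underlying space and the lattice of invariant subspaces, $\mathcal{M}$ is infinite-dimensional and $U|_{\mathcal{M}}$ has no non-trivial invariant subspace. As any $U$-invariant subspace contained in $\mathcal{M}$ is automatically invariant for $U|_{\mathcal{M}}$, this forces $\mathcal{M}$ to be itself a minimal non-trivial invariant subspace of $U$, hence of $C_\phi$. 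This infinite-dimensional minimal invariant subspace contradicts the hypothesis that all such subspaces are one-dimensional, completing the contrapositive.

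The mathematical substance is contained entirely in the universality of $U$, which is already proved, so I do not expect any genuine obstacle; the only points needing care are the verifications that passing to a scalar translate preserves the invariant subspaces and that similarity preserves the invariant-subspace lattice together with the dimension of the underlying space, both of which are routine.
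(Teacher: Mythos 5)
Your proposal is correct and follows the same route as the paper: the paper's proof consists precisely of invoking the universality of $U=C_\phi-\lambda$ (Theorem \ref{hyperbolic nonaut univ D} together with Nordgren--Rosenthal--Wintrobe for the automorphism case), observing that $C_\phi$ and $C_\phi-\lambda$ share the same invariant subspaces, and then appealing to the standard Rota-type equivalence stated in the introduction. The only difference is that you write out both implications of that equivalence explicitly (which the paper treats as known), and your two "points needing care" are indeed routine; one could additionally remark that the minimal subspace $\mathcal{M}$ produced in your contrapositive is automatically proper, since the constants form a one-dimensional invariant subspace of $U$.
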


In the rest of this work we shall exclusively focus on the \emph{canonical} hyperbolic  non-automorphism defined on $\mathbb{D}$ by \[\phi_a(z)=az+(1-a), \ \ 0<a<1\]
with $1$ and $\infty$ as the fixed points outside $\mathbb{D}$. By Corollory \ref{Min inv 1-dim} it is clear that a deeper understanding of the minimal invariant subspaces of $C_{\phi_a}$ is key to attacking the ISP. Therefore the next section is dedicated to the minimal invariant subspaces of $C_{\phi_a}$ on $H^2(\mathbb{D})$. 

\section{Minimal invariant subspaces of $C_{\phi_a}$} 

 In the rest of this work we shall denote $H^2:=H^2(\mathbb{D})$. In this section we study the minimal invariant subspaces of $C_{\phi_a}$ for $a\in(0,1)$. First note that for $a,b\in(0,1)$ we get
 \[
 \phi_a\circ\phi_b(z)=abz+1-ab=\phi_{ab}(z)
 \]
 and hence that $C_{\phi_a}C_{\phi_b}=C_{\phi_{ab}}=C_{\phi_b}C_{\phi_a}$. It follows that $\{C_{\phi_a}:a\in(0,1)\}$ is a multiplicative semigroup of operators. In particular the $n$-th compositional iterate of $\phi_a$ is simply given by
 \begin{equation}\label{Compositional iterates}
 \phi_a^{[n]}=a^nz+(1-a^n)=\phi_{a^n}(z)
 \end{equation}
 for each $n\in\mathbb{N}$ and $C^n_{\phi_a}=C_{\phi_a^{[n]}}=C_{\phi_{a^n}}$.  Hence $\phi_a^{[n]}\To 1$ uniformly on $\overline{\mathbb{D}}$ as $n\to\infty$. For each non-zero $f\in H^2$, we denote by $K_f$ the \emph{cyclic subspace} defined by
\[
K_f=\overline{\mathrm{span}\{C^n_{\phi_a} f: n\geq 0\}}^{H^2}.
\] 
Each such $K_f$ is an invariant subspace for $C_{\phi_a}$. If $K_f=H^2$ then $f$ is called a \emph{cyclic vector} for $C_{\phi_a}$. If $E$ is an invariant subspace of $C_{\phi_a}$, then $K_f\subset E$ for all $f\in E$. So if $E$ is a minimal $C_{\phi_a}$-invariant subspace then $K_f=E$ for all $f\in E$. Hence minimal invariant subspaces are \emph{necessarily} cyclic. Noting that $\mathrm{dim}(K_f)=1$ precisely when $f$ is a $C_{\phi_a}$-eigenvector, we may restate the ISP as follows. The ISP has a positive solution if and only if $K_f$ is minimal precisely when $f$ is a $C_{\phi_a}$-eigenvector. Our first result shows that we need only consider $f\in H^2$ that are analytic on the unit circle $\mathbb{T}$ minus the point $1$.

\begin{prop}\label{ISP T\1} Each minimal invariant $K_f$ has a generator  that is analytic on a neighborhood of $\overline{\mathbb{D}}\setminus\{1\}$. This neighborhood can be chosen large enough to include any given compact subset of the half-plane $\mathbb{H}=\{z\in\mathbb{C}:\mathrm{Re}(z)<1\}$.
\end{prop}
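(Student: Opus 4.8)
The plan is to exploit the fact that composing with the contraction $\phi_{a^n}$ automatically continues an $H^2$ function analytically past $\mathbb{T}\setminus\{1\}$, and then to use the minimality of $K_f$ to replace the given generator $f$ by a suitable iterate $C_{\phi_a}^n f\in K_f$, which is then again a generator but with better regularity.

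The geometric input I would establish first is the following. For $c=a^n\in(0,1)$ the affine map $\phi_{a^n}(z)=a^nz+(1-a^n)$ has inverse $z\mapsto a^{-n}z+(1-a^{-n})$, so that $D_n:=\phi_{a^n}^{-1}(\mathbb{D})$ is the open disk centred at $1-a^{-n}$ of radius $a^{-n}$. A one-line computation shows that $\partial D_n$ passes through $1$ and is tangent there to the line $\{\mathrm{Re}\,z=1\}$, with $D_n\subset\mathbb{H}$; since $0<a<1$ the radii $a^{-n}$ increase, so the disks are nested, $\overline{\mathbb{D}}\setminus\{1\}\subset D_1$, and $\bigcup_{n\ge1}D_n=\mathbb{H}$. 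The exhaustion is seen by rewriting the membership $z\in D_n$ as $|z-1|^2<2a^{-n}(1-\mathrm{Re}\,z)$, whose right-hand side tends to $+\infty$ as $n\to\infty$ for every $z$ with $\mathrm{Re}\,z<1$.

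With this in hand, the key observation is that the $H^2$ element $C_{\phi_a}^n f=f\circ\phi_{a^n}$ extends holomorphically to $D_n$: the expression $f\circ\phi_{a^n}$ is holomorphic on all of $D_n=\phi_{a^n}^{-1}(\mathbb{D})$ and coincides on $\mathbb{D}$ with the $H^2$ function $C_{\phi_a}^nf$, hence is its unique analytic continuation. Thus $C_{\phi_a}^n f$ is analytic on $D_n$, an open neighbourhood of $\overline{\mathbb{D}}\setminus\{1\}$ whenever $n\ge1$. Because $C_{\phi_a}$ is injective and $f\neq0$, every $g_n:=C_{\phi_a}^nf$ is a nonzero element of $K_f$, and minimality forces $K_{g_n}=K_f$; hence each $g_n$ is itself a generator. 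Taking $n=1$ proves the first assertion. For the refinement, given a compact $S\subset\mathbb{H}$, nestedness together with $\bigcup_n D_n=\mathbb{H}$ and compactness yields $N\ge1$ with $S\subset D_N$, and then $g_N$ is a generator analytic on $D_N\supset(\overline{\mathbb{D}}\setminus\{1\})\cup S$.

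I expect the only real care to be needed in the geometry step---pinning down the nesting and the precise exhaustion of $\mathbb{H}$ by the disks $D_n$---while the operator-theoretic heart of the argument (upgrading $f$ to $C_{\phi_a}^nf$ via minimality, and invoking the identity theorem for the continuation) is essentially formal.
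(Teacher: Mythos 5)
Your proof is correct and takes essentially the same route as the paper's: the same exhaustion of $\mathbb{H}$ by the nested disks of radius $a^{-n}$ centred at $1-a^{-n}$, the same analytic continuation of $C_{\phi_a}^n f=f\circ\phi_{a^n}$ to those disks, and the same minimality argument forcing $K_{g_n}=K_f$. Your explicit appeal to the injectivity of $C_{\phi_a}$ to guarantee $g_n\neq 0$ is a small point the paper leaves implicit, but otherwise the two arguments coincide.
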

\begin{proof} Consider the sequence of open disks $\mathbb{D}_n$ with center $1-a^{-n}$ and radius $a^{-n}$. Then $\mathbb{D}=\mathbb{D}_0\subset\mathbb{D}_1\subset\ldots$ is an increasing chain of disks having $1$ as the common boundary point and centers $1-a^{-n}$ tending to $-\infty$. Therefore $\mathbb{H}=\cup_{n=0}^\infty\mathbb{D}_n$. Also we have 
	$\phi_a(\mathbb{D}_n)=a\mathbb{D}_n+(1-a)=\mathbb{D}_{n-1}$ for $n\geq 1$ which implies $\phi_{a^n}(\mathbb{D}_n)=\mathbb{D}$. Clearly $\overline{\mathbb{D}}\setminus\{1\}\subset\mathbb{D}_n$ for all $n\geq 1$. Then for a given compact subset $L$ of $\mathbb{H}$ there exists $N\in\mathbb{N}$ such that $L\subset\mathbb{D}_N$.  For any $f\in H^2$ define the function $f_N:=f\circ\phi_{a^N}\in H^2$ which is analytic on $\mathbb{D}_N$ and $K_{f_N}\subset K_f$. The key observation is that if $K_f$ is minimal then $K_{f_N}=K_f$. So each minimal $K_f$ has a generator analytic on $\mathbb{D}_N$. 
\end{proof}

The principle examples of eigenvectors for $C_{\phi_a}$ are the functions 
\[
f_s(z)=(1-z)^s
\]
for $s\in\mathbb{C}$. In fact for $z\in\mathbb{D}$ we have
\begin{equation}\label{Principal eigenvector}
(C_{\phi_a}f_s)(z)=(1-(az+1-a))^s=a^s(1-z)^s=a^sf_s(z).
\end{equation}
So $C_{\phi_a}f_s=a^sf_s$ for all $a\in(0,1)$ and Hurst \cite[Lemma 7]{hust} showed that $f_s\in H^2$ if and only if $\mathrm{Re}(s)>-1/2$. We now state our main theorem on minimal invariant subspaces.
\begin{thm}\label{Main Minimal Invariant Thm} Let $f=f_sg$ for some $g\in H^2$ with $\lim_{n\to\infty} g(1-a^n)=L\neq 0$ and $\mathrm{Re}(s)\geq 0$.
	Then $K_f$ contains the eigenvector $f_s$. Therefore $K_f$ is minimal invariant if and only if $f=Lf_s$.
\end{thm}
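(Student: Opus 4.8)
The plan is to isolate the single essential claim, namely that the eigenvector $f_s$ belongs to $K_f$, and to note that the stated equivalence is then purely formal. Granting $f_s\in K_f$, the one–dimensional eigenspace $\mathbb{C}f_s=K_{f_s}$ is a $C_{\phi_a}$–invariant subspace contained in $K_f$. If $K_f$ is minimal, minimality forces $K_f=\mathbb{C}f_s$, whence $f\in\mathbb{C}f_s$; writing $f=cf_s$ gives $g\equiv c$ and hence $c=\lim_{n}g(1-a^n)=L$, i.e. $f=Lf_s$. Conversely, if $f=Lf_s$ then $K_f=\mathbb{C}f_s$ is one dimensional and so trivially minimal. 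Thus the whole content of the theorem is the inclusion $f_s\in K_f$.

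To produce $f_s$ inside $K_f$ I would exploit the multiplicativity of composition operators over products together with the eigenrelation \eqref{Principal eigenvector}. Since $C_{\phi_a}^n=C_{\phi_{a^n}}$ by \eqref{Compositional iterates} and $C_{\phi_a}^n(f_sg)=(C_{\phi_a}^nf_s)(C_{\phi_a}^ng)=a^{ns}f_s\,(g\circ\phi_{a^n})$, each scaled iterate
\[
u_n:=a^{-ns}C_{\phi_a}^nf=f_s\cdot(g\circ\phi_{a^n})
\]
already lies in $K_f$. As $K_f$ is closed, it suffices to show $u_n\to Lf_s$ in $H^2$; since $L\neq0$ this forces $f_s\in K_f$. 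Using that constants are fixed by $C_{\phi_a}$ and setting $G:=g-L\in H^2$, the target reduces to
\[
\|u_n-Lf_s\|=\big\|\,f_s\cdot(G\circ\phi_{a^n})\,\big\|\longrightarrow 0 .
\]

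The decisive geometric input is that $\phi_{a^n}(z)=1-a^n(1-z)$ carries $\overline{\mathbb D}$ into the shrinking disks $\Delta_n=\phi_{a^n}(\mathbb D)$ of centre $1-a^n$ and radius $a^n$, all internally tangent to $\mathbb T$ at the fixed point $1$ (cf. Proposition \ref{ISP T\1}); thus $G\circ\phi_{a^n}$ samples $G$ ever closer to $1$, where the hypothesis pins the value at the centre $\phi_{a^n}(0)=1-a^n$ via $G(1-a^n)\to0$. Passing to boundary values and using that $\mathrm{Re}(s)\geq0$ makes $f_s=(1-z)^s$ bounded on $\overline{\mathbb D}$ (so $M_{f_s}$ is a bounded multiplier), I would estimate
\[
\big\|f_s(G\circ\phi_{a^n})\big\|^2=\frac{1}{2\pi}\int_0^{2\pi}\big|f_s(e^{i\theta})\big|^2\,\big|G(\phi_{a^n}(e^{i\theta}))\big|^2\,d\theta
\]
and split the circle into a short arc about $1$ and its complement. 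Off the arc the points $\phi_{a^n}(e^{i\theta})$ tend to $1$ from within a fixed Stolz angle, so that, dominating $|G\circ\phi_{a^n}|$ by the nontangential maximal function of $G$ (which lies in $L^2(\mathbb T)$) and invoking the boundary behaviour of $g$ at $1$ recorded by the limit hypothesis, one passes to the limit by dominated convergence and obtains a vanishing contribution. On the arc about $1$ the contribution must be controlled by the weight $|f_s|$ together with the smallness of the arc.

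The hard part is precisely this last region: as $\theta\to0$ the approach $\phi_{a^n}(e^{i\theta})\to1$ becomes \emph{tangential}, and the naive multiplier bound $\|f_s(G\circ\phi_{a^n})\|\le\|f_s\|_\infty\,\|C_{\phi_a}^nG\|=O(a^{-n/2}\|G\|)$ actually \emph{diverges}, so no crude estimate can work and genuine cancellation from the concentration of $G\circ\phi_{a^n}$ near $1$ is indispensable. I expect that making this tangential contribution vanish — rather than the routine off–arc estimate — is where the real work lies, and that it is exactly this step that forces the restriction $\mathrm{Re}(s)\geq0$ and that makes essential use of the normalization $L\neq0$.
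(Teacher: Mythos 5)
Your formal reduction is the same as the paper's and is fine: granting $f_s\in K_f$, minimality forces $K_f=\mathbb{C}f_s$ and $f=Lf_s$, and the scaled iterates $u_n=a^{-ns}C^n_{\phi_a}f=f_s\,(g\circ\phi_{a^n})\in K_f$ reduce everything to $\norm{f_s\,(G\circ\phi_{a^n})}_2\to 0$ with $G=g-L$ (the bound $\norm{f_s(G\circ\phi_{a^n})}_2\le\norm{f_s}_\infty\norm{C_{\phi_{a^n}}G}_2$, valid since $\mathrm{Re}(s)\ge 0$ makes $f_s$ bounded, also matches the paper). But the analytic core is exactly what you leave open, and the boundary-arc route you sketch cannot be completed even in the region you call routine. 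The hypothesis $\lim_n g(1-a^n)=L$ controls $G$ only along the discrete orbit $(1-a^n)=(\phi_{a^n}(0))$; it supplies no nontangential limit of $G$ at $1$ and no finiteness of the nontangential maximal function \emph{at the single point} $1$ (a.e.\ finiteness of $G^*$ on $\mathbb{T}$ is useless here, since all the points $\phi_{a^n}(e^{i\theta})$ accumulate only at $1$, where $G^*$ may be infinite for a general $G\in H^2$). So on the off-arc region you have neither pointwise convergence of $G(\phi_{a^n}(e^{i\theta}))$ nor a dominating function, and dominated convergence does not apply. Your closing diagnosis also misplaces the hypotheses: $\mathrm{Re}(s)\ge 0$ is used only to make $f_s$ an $H^\infty$ multiplier, and $L\neq 0$ only in the formal step where one divides by $L$; neither is what handles the tangential approach.

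The missing idea, which is the paper's proof, is to avoid boundary values entirely via Shapiro's change-of-variables formula \eqref{Change of variables Shapiro}. Since $\phi_{a^n}(0)=1-a^n$ and $(g-L)'=g'$,
\begin{equation*}
\norm{C_{\phi_{a^n}}(g-L)}_2^2=2\int_{\mathbb{D}}|g'(w)|^2\,N_{\phi_{a^n}}(w)\,dA(w)+|g(1-a^n)-L|^2,
\end{equation*}
and this identity is precisely the device that converts the hypothesis---which sees $g$ only at the centers $1-a^n$---into norm convergence: the second term tends to $0$ by assumption, while the first tends to $0$ because $\phi_{a^n}(\mathbb{D})=a^n\mathbb{D}+(1-a^n)$ are disks of radius $a^n$ internally tangent at $1$, so every fixed $w\in\mathbb{D}$ lies outside $\phi_{a^n}(\mathbb{D})$ for all large $n$, whence $N_{\phi_{a^n}}(w)$ decreases pointwise to $0$ and a monotone convergence argument kills the integral. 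In other words, the ``genuine cancellation'' you anticipated near the tangency point is not cancellation at all: the Littlewood--Paley identity localizes the entire boundary difficulty into the area integral against the Nevanlinna counting function, which vanishes for purely geometric reasons. Without this (or an equivalent) identity, your outline does not yield the theorem.
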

\begin{proof} By the hypothesis $f_s$ is an $H^2$-multiplier for $\mathrm{Re}(s)\geq 0$ we have $f_sg\in H^2$. Since $C^n_{\phi_a}f=a^{ns}f_sC^n_{\phi_a}g$ and $\phi_{a^{n}}(0)=1-a^n$, by \eqref{Change of variables Shapiro} we get
	\begin{align*}
	\norm{\frac{C^n_{\phi_a} f}{a^{ns}}-Lf_s}^2_2&=\norm{f_s(C_{\phi_{a}}^ng-L)}^2_2 
	\leq M\norm{C_{\phi_{a}}^n(g-L)}^2_2= M\norm{C_{\phi_{a^n}}(g-L)}^2_2\\
	&=2M\int_{\mathbb{D}}|g'(w)|^2 N_{\phi_{a^n}}(w) dA(w)+|g(1-a^n)-L|^2.
	\end{align*}
	We need only prove that the last integral tends to $0$ as $n\to\infty$. This will follow by a monotone convergence argument. Notice that the images $\phi_{a^n}(\mathbb{D})=a^n\mathbb{D}+(1-a^n)$ are open disks of decreasing radii $a^n$ with centers $1-a^n$ tending to $1$. Therefore for each $w\in\mathbb{D}$ we have $w\notin\phi_{a^n}(\mathbb{D})$ and hence $N_{\phi_{a^n}}(w)=0$ for all sufficiently large $n$. So $N_{\phi_{a^n}}$ is a monotonically decreasing positive function on $\mathbb{D}$ with pointwise limit $0$. Hence the integral above vanishes as claimed and with $g(1-a^n)\to L$ as $n\to\infty$ this implies that $C^n_{\phi_a} f/a^{ns}\to Lf_s$ in $H^2$. Therefore the eigenvector $f_s\in K_f$ and $K_f$ is minimal if and only if $K_f=\mathbb{C}f_s$, in which case $f=Lf_s$. 
\end{proof}

 The case $s=0$ provides a complete characterization of the minimal $C_{\phi_a}$-invariant $K_f$ when $f^*(1)$ is finite and non-zero. In fact $f^*(1)$ need not even exist.

\begin{cor}\label{Minimal f* not 0}	Let $f\in H^2$ with $\lim_{n\to\infty}f(1-a^n)=L\neq0$. Then $K_f$ is a minimal invariant subspace for $C_{\phi_a}$ if and only if $f$ is the constant $L$.
\end{cor}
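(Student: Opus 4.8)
The plan is to specialize Theorem \ref{Main Minimal Invariant Thm} to the value $s=0$. First I would observe that $f_0(z)=(1-z)^0\equiv 1$ is the constant function $1$, which is an eigenvector of $C_{\phi_a}$ with eigenvalue $a^0=1$ by \eqref{Principal eigenvector}, and that $f_0\in H^\infty$ acts as a bounded multiplier. With $s=0$ we have $\mathrm{Re}(s)=0\geq 0$, so the hypotheses of the theorem are met upon taking $g=f$: indeed $f=f_0 g=g\in H^2$, and the assumption $\lim_{n\to\infty}f(1-a^n)=L\neq 0$ is exactly the requirement $\lim_{n\to\infty}g(1-a^n)=L\neq 0$ demanded of $g$.

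Applying the theorem then yields at once that $K_f$ is minimal invariant if and only if $f=Lf_0$, that is, $f$ is the constant function $L$. For the converse sanity check I would note the consistency of this conclusion: when $f\equiv L$ is constant it is itself an eigenvector, so $K_f=\mathbb{C}f$ is one-dimensional and therefore minimal. Thus both implications are accounted for by the single equivalence provided by the theorem.

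Since the substantive analytic work — the monotone convergence argument controlling the Nevanlinna counting function $N_{\phi_{a^n}}$ inside the change-of-variables formula \eqref{Change of variables Shapiro} — has already been carried out in the proof of Theorem \ref{Main Minimal Invariant Thm}, there is essentially no obstacle remaining here. The only point requiring a moment's attention is confirming that the $s=0$ specialization is legitimate, namely that $f_0\equiv 1$ is a bounded multiplier (so that $f=f_0 g\in H^2$ whenever $g\in H^2$) and that the hypothesis on the boundary values transfers verbatim from $g$ to $f$; both are immediate, so the corollary reduces to an application of the theorem.
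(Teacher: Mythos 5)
Your proposal is correct and takes essentially the same route as the paper, which records this corollary as the immediate $s=0$ specialization of Theorem \ref{Main Minimal Invariant Thm} (with $g=f$ and $f_0\equiv 1$) and offers no further argument. Your verification that the hypotheses transfer verbatim, and the sanity check that a constant $f$ is an eigenvector generating a one-dimensional $K_f$, are exactly the points one should confirm.
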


Assuming analyticity of $f$ at $1$, we can also completely characterize the minimal invariant $K_f$. 

\begin{cor} \label{Minimal anal at 1}Let $f\in H^2$ be analytic at $1$. Then $K_f$ is minimal if and only if $f$ is a scalar multiple of $f_N(z)=(1-z)^N$ for some $N\in\mathbb{N}$.
\end{cor}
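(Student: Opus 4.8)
The plan is to reduce the statement to Theorem \ref{Main Minimal Invariant Thm} by factoring out the zero of $f$ at the fixed point $1$. The ``if'' direction is immediate: if $f=c\,f_N$ for a scalar $c$ and an integer $N\geq 0$, then \eqref{Principal eigenvector} gives $C_{\phi_a}f=a^N f$, so $f$ is an eigenvector and $K_f=\mathbb{C}f$ is one dimensional, hence minimal. All the content is in the converse.

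For the ``only if'' direction, suppose $K_f$ is minimal with $f\in H^2$ analytic at $1$. Since $f$ is not identically zero and is analytic on some neighborhood $V$ of $1$, the identity theorem shows that $f$ has a zero of some finite order $N\geq 0$ at $1$ (with $N=0$ meaning $f(1)\neq 0$). I would then write $f(z)=(1-z)^N g(z)=f_N(z)g(z)$, where $g:=f/(1-z)^N$ has a removable singularity at $1$ and is therefore analytic on $\mathbb{D}\cup V$ with $g(1)\neq 0$. Because $1-a^n\to 1$ and $g$ is continuous at $1$, we get $\lim_{n\to\infty}g(1-a^n)=g(1)\neq 0$. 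Taking $s=N$, so that $\mathrm{Re}(s)=N\geq 0$, all hypotheses of Theorem \ref{Main Minimal Invariant Thm} will be met once I verify that $g\in H^2$; the theorem then forces $f=g(1)f_N$, which is precisely the assertion that $f$ is a scalar multiple of $(1-z)^N$.

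The main obstacle is exactly the verification that $g\in H^2$, that is, $\sup_{0\le r<1}\norm{g_r}_{L^2(\mathbb{T})}<\infty$, and I would prove it by splitting the circle into an arc near $1$ and its complement. Choosing a smaller neighborhood $V'$ of $1$ whose closure lies in $V$, the function $g$ agrees on $\overline{V'}$ with a function analytic on $V$, hence bounded there, so the contribution to $\norm{g_r}_{2}^2$ from the arc inside $V'$ is uniformly bounded in $r$. On the complementary arc, $|1-z|$ is bounded below by a positive constant, so $1/|1-z|^N\leq C$ there and $|g|\leq C|f|$; the corresponding contribution is at most $C^2\norm{f}_{H^2}^2$. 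Adding the two estimates gives the required uniform bound, so $g\in H^2$.

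With $g\in H^2$ established, I apply Theorem \ref{Main Minimal Invariant Thm} to the factorization $f=f_N g$ with $L=g(1)\neq 0$: it places the eigenvector $f_N$ inside $K_f$ and, since $K_f$ is assumed minimal, forces $f=g(1)f_N$. Thus $f$ is a scalar multiple of $f_N(z)=(1-z)^N$, completing the proof.
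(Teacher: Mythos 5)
Your proof is correct and reaches the same reduction to Theorem \ref{Main Minimal Invariant Thm}, but it diverges from the paper at the decisive technical step. Both arguments factor $f=f_Ng$ at the fixed point, where $N$ is the order of the zero of $f$ at $1$ and $g$ is analytic on $\mathbb{D}\cup V$ with $g(1)\neq 0$; the difference is how the hypothesis $g\in H^2$ is secured. You verify it directly, whereas the paper never does: it replaces $f$ by $h:=C_{\phi_{a^n}}f/a^{nN}=f_N\,(g\circ\phi_{a^n})$ for $n$ large enough that $\phi_{a^n}(\mathbb{D})\subset U$, so that the cofactor $g\circ\phi_{a^n}$ is \emph{bounded} and holomorphic on $\mathbb{D}$ with $(g\circ\phi_{a^n})(1)=g(1)$; Theorem \ref{Main Minimal Invariant Thm} then puts the eigenvector $f_N$ in $K_h\subset K_f$, and minimality forces $K_f=\mathbb{C}f_N$. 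This semigroup pull-back (the same device as in Proposition \ref{ISP T\1}) sidesteps all boundary integral estimates, while your route is more elementary and applies the theorem to $f$ itself. One small repair to your $H^2$ verification: you split $\norm{g_r}_{L^2(\mathbb{T})}^2$ by arcs of $\mathbb{T}$, but for a fixed arc near $1$ the points $re^{i\theta}$ with small $r$ do \emph{not} lie in $V'$, so the contribution of the ``arc inside $V'$'' is not controlled by boundedness of $g$ on $\overline{V'}$ alone. The fix is to split the disk rather than the circle: choosing $V'$ with $\overline{V'}\subset V$, on $\mathbb{D}\cap V'$ you have $|g|\le M:=\sup_{\overline{V'}\cap\overline{\mathbb{D}}}|g|<\infty$ (the sup is finite since $g$ extends continuously to the compact set $\overline{V'}\cap\overline{\mathbb{D}}\subset\mathbb{D}\cup V$), and on $\mathbb{D}\setminus V'$ you have $|1-z|\ge c:=\mathrm{dist}(1,\mathbb{C}\setminus V')>0$, giving the pointwise bound $|g(z)|^2\le M^2+c^{-2N}|f(z)|^2$ on all of $\mathbb{D}$ and hence $\norm{g_r}_{L^2(\mathbb{T})}^2\le M^2+c^{-2N}\norm{f}_{\mathbb{D}}^2$ uniformly in $r$. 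With that adjustment your argument is complete, and it yields the slightly stronger byproduct that the factorization $f=f_Ng$ itself satisfies the hypotheses of Theorem \ref{Main Minimal Invariant Thm}, at the modest cost of an estimate the paper's iteration trick avoids entirely.
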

\begin{proof} If $f(1)\neq 0$ then the result follows from the previous corollary. Therefore let $f(1)=0$. Hence there exists a neighborhood $U$ of $1$ such that $f=f_Ng$ for a function $g$ analytic on $U$ with $g(1)=L\neq 0$ and $N$ the multiplicity of $f$ at $1$. Now \[C_{\phi_{a^n}}f=a^{nN}f_N g\circ \phi_{a^n}\]
	and if $n$ is sufficiently large say $n>k$ then $\phi_{a^n}(\mathbb{D})=a^n\mathbb{D}+(1-a^n)\subset U$. Hence $g\circ \phi_{a^n}$ is a bounded holomorphic function on $\mathbb{D}$ for $n>k$ with $g\circ \phi_{a^n}(1)=L$. Now applying Theorem \ref{Main Minimal Invariant Thm} with $h:=C_{\phi_{a^n}}f/a^{nN}$ for some $n>k$ and $s=N$ implies that the eigenvector $f_N\in K_h\subset K_f$ which concludes the proof.
\end{proof}

For $b>0$, let $E_b$ denote the \emph{singular shift-invariant} subspace 
\[
E_b=e^{b\frac{z+1}{z-1}}H^2(\mathbb{D}).
\]
It is clear that $E_b\subset E_{b'}$ if $b'<b$ because $e^{b'\frac{z+1}{z-1}}$ divides $e^{b\frac{z+1}{z-1}}$ as an inner function. Cowen and Wahl (see \cite[Theorem 5]{Cowen-Rebecca}) showed that if $\phi$ is any self-map of the disk with $\phi(1)=1$ and $\phi'(1)\leq 1$, then each $E_b$ is an invariant subspace for $C_\phi$. In particular $C_{\phi_a}E_b\subset E_b$ for all $b>0$. None of these $f\in E_b$ satisfy the hypothesis of Theorem \ref{Main Minimal Invariant Thm} since $(f/f_s)^*(1)=0$ for all $f\in E_b$ and $s\in\mathbb{C}$. However the corresponding $K_f$ are not minimal.

\begin{thm}\label{Singular shift} For any $b>0$ and $f\in E_b$ non-zero, the cyclic subspace $K_f$ is not minimal invariant for $C_{\phi_a}$. 
\end{thm}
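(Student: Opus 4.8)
The plan is to prove non-minimality by exhibiting inside $K_f$ a nonzero, \emph{proper} $C_{\phi_a}$-invariant subspace. The natural candidate is $K_g$ where $g:=C_{\phi_a}f$: since $g$ is one of the spanning vectors $C^n_{\phi_a}f$ of $K_f$, its forward orbit is a subset of that of $f$, so $K_g\subseteq K_f$, and $g\neq 0$ because $C_{\phi_a}$ is injective (its symbol is non-constant). The whole argument thus reduces to showing $K_g\subsetneq K_f$, which I will obtain by proving that $g$ lies in a \emph{strictly more singular} model subspace than $f$ does.

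The key computation — and the heart of the proof — is that $C_{\phi_a}$ scales the singular mass at the point $1$ by the factor $1/a$. Writing $\Theta_c(z)=e^{c\frac{z+1}{z-1}}$ for the singular inner function with mass $c>0$ at $1$, so that $E_c=\Theta_c H^2$, a direct manipulation of $\phi_a(z)=az+(1-a)$ gives
\[
\frac{\phi_a(z)+1}{\phi_a(z)-1}=\frac{a(z-1)+2}{a(z-1)}=\Bigl(1-\tfrac1a\Bigr)+\frac1a\cdot\frac{z+1}{z-1},
\]
and hence $\Theta_c\circ\phi_a=e^{c(1-1/a)}\,\Theta_{c/a}$. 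Since $0<a<1$ we have $c/a>c$, so composition with $\phi_a$ turns $\Theta_c$ into a positive scalar multiple of the more singular $\Theta_{c/a}$.

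To finish, let $\alpha$ be the exact point mass at $1$ of the singular inner factor of $f$; this is finite and, since $f\in E_b$ is nonzero, satisfies $\alpha\geq b>0$, and one has the characterization $f\in E_{c}\iff c\leq\alpha$. Factoring $f=\Theta_\alpha u$ with $u\in H^2$ and applying the key computation, $g=C_{\phi_a}f=e^{\alpha(1-1/a)}\,\Theta_{\alpha/a}\,(u\circ\phi_a)\in E_{\alpha/a}$ because $u\circ\phi_a=C_{\phi_a}u\in H^2$. By the Cowen–Wahl invariance of $E_{\alpha/a}$ under $C_{\phi_a}$ (valid since $\phi_a(1)=1$ and $\phi_a'(1)=a\leq 1$), it follows that $K_g\subseteq E_{\alpha/a}$. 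On the other hand $f\notin E_{\alpha/a}$ since $\alpha/a>\alpha$ equals the maximal admissible mass; therefore $f\in K_f\setminus K_g$, giving $K_g\subsetneq K_f$. As $K_g$ is a nonzero proper $C_{\phi_a}$-invariant subspace of $K_f$, the subspace $K_f$ is not minimal. I expect the only delicate point to be the bookkeeping of the singular mass: one must argue cleanly that $\alpha$ is the \emph{largest} $c$ with $f\in E_c$ and that, after composition, the mass at $1$ of $g$ is at least $\alpha/a$ strictly exceeding $\alpha$ — the scalar-multiple identity for $\Theta_c\circ\phi_a$ is what makes this transparent, while the remaining steps (injectivity, invariance, nesting of the $E_c$) are routine.
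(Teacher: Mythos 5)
Your proposal is correct and takes essentially the same approach as the paper: both rest on the identity $\Theta_c\circ\phi_a=e^{c(1-1/a)}\Theta_{c/a}$ (so that $C_{\phi_a}E_c\subset E_{c/a}$) and both exhibit a proper nonzero invariant subspace $K_g\subsetneq K_f$ with $g$ a forward iterate of $f$, trapped in a singular subspace that excludes $f$. The only difference is cosmetic: you pin down the exact atom $\alpha=\mu_f(\{1\})$ so that the single iterate $g=C_{\phi_a}f$ already lands in $E_{\alpha/a}\not\ni f$, whereas the paper argues more softly, using finiteness of $\mu_f(\{1\})$ to find some $E_N$ with $f\notin E_N$ and then taking $g=C^{n_0}_{\phi_a}f$ with $n_0$ large enough that $E_{b/a^{n_0}}\subset E_N$.
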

\begin{proof}
	First note that
	\[
	C_{\phi_a}e^{b\frac{z+1}{z-1}}=e^{\frac{b}{a}\frac{az-a+2}{z-1}}=e^{\frac{b}{a}\frac{z+1}{z-1}}e^{\frac{b}{a}(a-1)}\in E_{b/a}
	\]
	which implies that $C^n_{\phi_a}E_b\subset E_{b/a^n}$ for all $n\geq 1$ and clearly $E_{b/a^n}\subset E_b$. We next prove that for each non-zero $f\in H^2(\mathbb{D})$ there exists an integer $N$ large enough (depending on $f$) such that $f\notin E_N$. Otherwise the inner part of $f$ would be divisible by each of the singular inner functions $I_n(z):=e^{n\frac{z+1}{z-1}}$ for $n\in\mathbb{N}$. But this implies that 
	\[
	2\pi n=\mu_n(\{1\})\leq\mu_f(\{1\})
	\]
	for all $n\in\mathbb{N}$, where $\mu_n$ and $\mu_f$ are the singular measures on $\mathbb{T}$ corresponding to $I_n$ and $f$ respectively (see \cite[Theorem 2.6.7]{Avendano Rosenthal}). Hence $\mu_f(\{1\})=\infty$ which is a contradiction. Now let $f\in E_b\setminus\{0\}$ for some $b>0$ and suppose $f\notin E_N$ for some $N>b$. Then there exists $n_0$ such that $C^n_{\phi_a}f\in E_{b/a^n}\subset E_N$ for all $n\geq n_0$. So if $g:=C^{n_0}_{\phi_a}f$, then $K_g\subset E_N$ which implies that $f\notin K_g$. Therefore $K_g$ is a proper closed invariant subspace of $K_f$ under $C_{\phi_a}$ and hence $K_f$ is not minimal.
\end{proof}

\section{Eigenvectors of $C_{\phi_a}$}
In this final section we study the properties of eigenvectors of $C_{\phi_a}$ for $a\in(0,1)$. The spectrum $\sigma(C_{\phi_a})=\{\lambda\in\mathbb{C}:|\lambda|\leq a ^{-1/2}\}$ was proved in \cite[Thm. 3.1]{Kamowitz} but in greater generality. It follows by Theorem \ref{Universal Comp halfplane} and Lemma \ref{equiv between C and D} that the point spectrum of $C_{\phi_a}$ contains $\{\lambda\in\mathbb{C}:0<|\lambda|<a^{-1/2}\}$. Our first result about eigenvectors follows immediately from Corollary \ref{Minimal f* not 0} and Corollary \ref{Minimal anal at 1}.
 
 \begin{thm}\label{Main eigenvector} Let $f$ be a non-zero $C_{\phi_a}$-eigenvector. Then $f^*(1)$ is finite and nonzero if and only if $f$ is a constant. And $f$ is analytic at the point $1$ if and only if $f(z)=K(1-z)^n$ for some $n\in\mathbb{N}$ and scalar $K$.
 \end{thm}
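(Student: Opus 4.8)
The plan is to reduce both equivalences to the two minimality corollaries via the elementary observation that an eigenvector generates a one-dimensional cyclic subspace. Concretely, if $f$ is a non-zero $C_{\phi_a}$-eigenvector, say $C_{\phi_a}f=\lambda f$, then $C^n_{\phi_a}f=\lambda^n f$ for all $n\geq 0$, so $K_f=\mathbb{C}f$ is one-dimensional and therefore minimal invariant. This places us precisely in the hypothesis ``$K_f$ is minimal'' of Corollaries \ref{Minimal f* not 0} and \ref{Minimal anal at 1}, and the conclusions of those corollaries then dictate the form of $f$.

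For the first equivalence, in the forward direction I would begin by identifying the radial limit with the relevant sequential limit. Since $0<a<1$, the points $1-a^n$ lie in $(0,1)$ and increase to $1$, hence approach $1$ radially from inside $\mathbb{D}$; so whenever $f^*(1)=\lim_{r\to1^-}f(r)$ exists, passing to this subsequence gives $\lim_{n\to\infty}f(1-a^n)=f^*(1)$. Thus if $f^*(1)$ is finite and non-zero we may set $L:=f^*(1)\neq 0$ and apply Corollary \ref{Minimal f* not 0} to the minimal subspace $K_f$, concluding that $f\equiv L$ is constant. The reverse direction is immediate: any non-zero constant $c$ satisfies $C_{\phi_a}c=c$, so it is an eigenvector, and it has $c^*(1)=c\neq 0$ finite.

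For the second equivalence, if $f$ is analytic at $1$ then Corollary \ref{Minimal anal at 1} applied to the minimal $K_f$ yields $f=K(1-z)^N$ for some $N\in\mathbb{N}$ and scalar $K$. Conversely, $(1-z)^N$ is entire, hence analytic at $1$, and is a $C_{\phi_a}$-eigenvector (with eigenvalue $a^N$) by the computation \eqref{Principal eigenvector}.

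Since essentially all of the analytic content has already been absorbed into the two corollaries, I do not expect any serious obstacle here; the only point requiring a moment's care is the radial-limit identification $\lim_{n\to\infty}f(1-a^n)=f^*(1)$, which rests solely on the fact that the $1-a^n$ lie on the radius terminating at the point $1$.
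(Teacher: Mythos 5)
Your proposal is correct and follows essentially the same route as the paper, which derives the theorem immediately from Corollaries \ref{Minimal f* not 0} and \ref{Minimal anal at 1} via the observation that an eigenvector generates the one-dimensional (hence minimal) cyclic subspace $K_f=\mathbb{C}f$. You merely make explicit the details the paper leaves implicit, namely the identification $\lim_{n\to\infty}f(1-a^n)=f^*(1)$ along the radius and the easy converse computations, both of which are fine.
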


We turn now to concrete function-theoretic properties of eigenvectors. We first show that the domain of analyticity of eigenvectors for $C_{\phi_a}$ can be extended to the half-plane $\mathbb{H}=\{z\in\mathbb{C}:\mathrm{Re}(z)<1\}$.
\begin{prop}\label{Eigenvector domain} If $f:\mathbb{D}\to\mathbb{C}$ is an eigenvector for $C_{\phi_a}$ for some $a\in(0,1)$, then $f$ has an analytic continuation to all of $\mathbb{H}$. 
		\end{prop}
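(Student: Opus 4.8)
The plan is to read the eigenvalue equation $C_{\phi_a}f=\lambda f$, i.e. $f(\phi_a(z))=\lambda f(z)$ for all $z\in\mathbb{D}$, as a functional equation that recovers $f$ on ever larger disks. First I would note that $\lambda\neq 0$: since $\phi_a(\mathbb{D})$ is a nonempty open subset of $\mathbb{D}$, a function vanishing on it vanishes identically, so $C_{\phi_a}$ is injective and $0$ is not an eigenvalue. Iterating the functional equation along the semigroup relation $C_{\phi_a}^n=C_{\phi_{a^n}}$ from \eqref{Compositional iterates} then gives
\[
f\bigl(\phi_{a^n}(z)\bigr)=\lambda^n f(z),\qquad z\in\mathbb{D},\ n\geq 0.
\]

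Next I would reuse the increasing chain of disks $\mathbb{D}_n$ (center $1-a^{-n}$, radius $a^{-n}$) constructed in the proof of Proposition~\ref{ISP T\1}, which satisfy $\mathbb{D}=\mathbb{D}_0\subset\mathbb{D}_1\subset\cdots$, $\bigcup_{n\geq 0}\mathbb{D}_n=\mathbb{H}$, and crucially $\phi_{a^n}(\mathbb{D}_n)=\mathbb{D}$. Because $\phi_{a^n}$ is entire and maps $\mathbb{D}_n$ into $\mathbb{D}$, the formula
\[
F_n(w):=\lambda^{-n}f\bigl(\phi_{a^n}(w)\bigr),\qquad w\in\mathbb{D}_n,
\]
defines a holomorphic function on $\mathbb{D}_n$; taking $n=0$, where $\phi_{a^0}$ is the identity, shows $F_0=f$, so each $F_n$ is a candidate holomorphic extension of $f$ past $\mathbb{T}\setminus\{1\}$.

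The step requiring care is verifying that the $F_n$ are mutually consistent, so that they glue to a single holomorphic function on $\mathbb{H}$. For $m<n$ and $w\in\mathbb{D}_m\subset\mathbb{D}_n$, I would use the composition identity $\phi_{a^n}=\phi_{a^{n-m}}\circ\phi_{a^m}$ (an immediate affine computation) together with $u:=\phi_{a^m}(w)\in\mathbb{D}$ and the iterated eigenvalue equation applied with exponent $n-m$ to obtain $f(\phi_{a^n}(w))=\lambda^{n-m}f(\phi_{a^m}(w))$; dividing by $\lambda^n$ yields $F_n(w)=F_m(w)$. Hence the $F_n$ patch together to a well-defined holomorphic function $F$ on $\mathbb{H}=\bigcup_n\mathbb{D}_n$ with $F|_{\mathbb{D}}=f$, which is the desired continuation.

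I expect no genuine obstacle here: the construction is essentially forced by the functional equation, and the work is the bookkeeping, namely confirming $\lambda\neq0$ so that the factors $\lambda^{-n}$ are legitimate, verifying the semigroup identity for the iterates $\phi_{a^n}$, and checking overlap consistency on the nested disks.
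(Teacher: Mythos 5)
Your proposal is correct and follows essentially the same route as the paper: the paper also uses the nested disks $\mathbb{D}_n$ with $\phi_{a^n}(\mathbb{D}_n)=\mathbb{D}$ and the identity $f=\lambda^{-n}\,f\circ\phi_{a^n}$ to continue $f$ to each $\mathbb{D}_n$ and hence to $\mathbb{H}$. Your only additions are details the paper leaves implicit, namely the injectivity argument for $\lambda\neq 0$ and the explicit overlap consistency $F_n=F_m$ on $\mathbb{D}_m$, both of which you verify correctly.
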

\begin{proof} Let $\mathbb{D}_n$ be the increasing sequence of disks with center $1-a^{-n}$, radius $a^{-n}$ and $1$ as their only common boundary point. Then $\mathbb{H}=\cup_{n=0}^\infty\mathbb{D}_n$ and $\phi_{a^n}(\mathbb{D}_n)=\mathbb{D}$ as in the proof of Proposition \ref{ISP T\1}. Therefore if $C_{\phi_a}f=\lambda f$ then $\lambda\neq 0$ and
	\[
	f=\frac{C_{\phi_a}^n f}{\lambda^n}=\frac{C_{\phi_{a^n}}f}{\lambda^n}=\frac{f\circ\phi_{a^n}}{\lambda^n}.
	\]
	This clearly implies that $f$ can be analytically continued to $\mathbb{D}_n$ for each $n\in\mathbb{N}$ and hence to all of $\mathbb{H}$.
		\end{proof}
	We next show that if an eigenvector has a zero in $\mathbb{H}$ then it has infinitely many.
	 \begin{prop}\label{Zeros of eigenvectors} If $f$ is a non-zero $C_{\phi_a}$-eigenvector and $f(w)=0$ for some $w\in\mathbb{H}$, then $f(a^n w+1-a^n)=0$ for all $n\in\mathbb{Z}$. In particular $f$ cannot be analytic at $1$. So if $f$ is entire then it can have no zeros in $\mathbb{C}\setminus\{1\}$.
	 	\end{prop}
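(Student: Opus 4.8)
The plan is to lift the eigenvector equation from $\mathbb{D}$ to the extended domain provided by Proposition \ref{Eigenvector domain}, and then exploit that every forward $\phi_a$-orbit accumulates at the fixed point $1$. Write $C_{\phi_a}f=\lambda f$; since $f\neq 0$ we have $\lambda\neq 0$ (as observed in the proof of Proposition \ref{Eigenvector domain}), and by that proposition $f$ continues analytically to all of $\mathbb{H}=\{z:\mathrm{Re}(z)<1\}$. The first step is the elementary observation that both $\phi_a$ and its inverse $\phi_a^{-1}=\phi_{1/a}$ map $\mathbb{H}$ into itself: for any $c>0$, if $\mathrm{Re}(z)<1$ then $\mathrm{Re}(\phi_c(z))=c\,\mathrm{Re}(z)+(1-c)<1$. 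Consequently $f\circ\phi_a$ and $\lambda f$ are both analytic on the connected set $\mathbb{H}$ and agree on $\mathbb{D}$, so the identity theorem gives $f(\phi_a(z))=\lambda f(z)$ for all $z\in\mathbb{H}$.

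Iterating this identity forward, and running it backward using $\phi_{1/a}(\mathbb{H})\subset\mathbb{H}$, together with the iterate formula \eqref{Compositional iterates}, I would obtain
\[
f(\phi_{a^n}(z))=\lambda^n f(z),\qquad z\in\mathbb{H},\ n\in\mathbb{Z},
\]
where $\phi_{a^n}(z)=a^nz+(1-a^n)$. In particular, if $f(w)=0$ for some $w\in\mathbb{H}$, then $f(\phi_{a^n}(w))=\lambda^n f(w)=0$ for every $n\in\mathbb{Z}$, which is precisely the first assertion.

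Next I would read off the geometry of these zeros. Writing $\phi_{a^n}(w)=1+a^n(w-1)$ and noting $w\neq 1$ (since $\mathrm{Re}(w)<1$), the points $\phi_{a^n}(w)$ are distinct, all lie in $\mathbb{H}$, and converge to $1$ as $n\to+\infty$ because $a^n\to 0$. Thus $f$ has a sequence of zeros in $\mathbb{H}$ accumulating at $1$. Were $f$ analytic on some neighborhood $U$ of $1$, then on the connected open set $\mathbb{H}\cup U$ the function $f$ would have zeros accumulating at the interior point $1$, forcing $f\equiv 0$ by the identity theorem and contradicting that $f$ is a non-zero eigenvector. Hence $f$ cannot be analytic at $1$.

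For the entire case the same mechanism applies with $\mathbb{H}$ replaced by $\mathbb{C}$: if $f$ is entire, then $f\circ\phi_a$ and $\lambda f$ are entire and agree on $\mathbb{D}$, so $f(\phi_{a^n}(z))=\lambda^n f(z)$ holds for all $z\in\mathbb{C}$ and $n\in\mathbb{Z}$. Any zero $w\neq 1$ then produces zeros $\phi_{a^n}(w)=1+a^n(w-1)\to 1$ accumulating at $1$, where $f$ is now analytic; the identity theorem again forces $f\equiv 0$, a contradiction. Therefore an entire eigenvector has no zeros in $\mathbb{C}\setminus\{1\}$. The only genuinely delicate point is justifying that the eigenvector equation survives analytic continuation, and this is handled entirely by the invariances $\phi_a(\mathbb{H}),\phi_{1/a}(\mathbb{H})\subset\mathbb{H}$ together with the identity theorem; everything else reduces to the observation that forward $\phi_a$-orbits limit onto the fixed point $1$.
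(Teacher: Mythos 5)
Your proposal is correct and takes essentially the same route as the paper: extend the eigenvector relation $f\circ\phi_a=\lambda f$ from $\mathbb{D}$ to $\mathbb{H}$ (or $\mathbb{C}$), iterate it in both directions using $\lambda\neq 0$ to place zeros along the orbit $1+a^n(w-1)$, $n\in\mathbb{Z}$, and play their accumulation at $1$ against analyticity there. If anything you are more explicit than the paper on two points it leaves implicit, namely the invariances $\phi_a(\mathbb{H})\subset\mathbb{H}$ and $\phi_{1/a}(\mathbb{H})\subset\mathbb{H}$ that justify the continuation of the relation and the backward iteration, and the identity-theorem step showing that zeros accumulating at $1$ force $f\equiv 0$ rather than merely $f(1)=0$.
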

 	
	\begin{proof} We first note that the relation $f\circ\phi_{a}=\lambda f$ extends from $\mathbb{D}$ to all of $\mathbb{H}$ or $\mathbb{C}$ depending on whether $f$ is analytic on $\mathbb{H}$ or $\mathbb{C}$.  If $n\geq0$, then we get \[f(a^n w+1-a^n)=f(\phi_{a^n}(w))=\lambda^n f(w)=0.\] If $n<0$, then $\lambda^{-n}f(a^n w+1-a^n)=(C_{\phi_{a^{-n}}}f)(a^n w+1-a^n)=f(w)=0.$
	Hence $f(a^n w+1-a^n)=0$ for all $n\in\mathbb{Z}$.  If $f$ is analytic at $1$ then letting $n\to +\infty$ gives $f(1)=0$ and hence $f\equiv 0$. So $f$ is non-analytic at $1$. Therefore if $f$ is entire it must be zero-free in $\mathbb{C}\setminus\{1\}$.	\end{proof}

The following result shows that derivatives of eigenvectors are eigenvectors.

\begin{prop}\label{f'/f} If $C_{\phi_a}f=\lambda f$ for some $f\in\mathrm{Hol}(\mathbb{D})$ and $\lambda\in\mathbb{C}$, then $C_{\phi_a}f'=\frac{\lambda}{a} f'$. In particular, if all derivatives $f^{(n)}\in H^2$ for $n\in \mathbb{N}$ then $f$ must be a polynomial. 
	\end{prop}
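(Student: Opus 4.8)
The plan is to treat the two assertions separately. For the identity $C_{\phi_a}f'=\frac{\lambda}{a}f'$ I would simply differentiate the eigenvalue equation. Writing $C_{\phi_a}f=\lambda f$ as $f(az+(1-a))=\lambda f(z)$ on $\mathbb{D}$ and differentiating both sides by the chain rule, the decisive feature of the affine symbol appears: since $\phi_a'(z)\equiv a$ is \emph{constant}, one gets $a\,f'(\phi_a(z))=\lambda f'(z)$, i.e.\ $C_{\phi_a}f'=\frac{\lambda}{a}f'$. This is a one-line computation with no obstacle.

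For the ``in particular'' claim, the first step is to iterate. Applying the identity just proved to $f^{(k)}\in\mathrm{Hol}(\mathbb{D})$, which satisfies $C_{\phi_a}f^{(k)}=\lambda a^{-k}f^{(k)}$, produces $C_{\phi_a}f^{(k+1)}=\lambda a^{-(k+1)}f^{(k+1)}$, so a routine induction gives
\[
C_{\phi_a}f^{(n)}=\frac{\lambda}{a^n}\,f^{(n)}\qquad(n\in\mathbb{N}).
\]
Thus each derivative $f^{(n)}$ is an eigenvector of $C_{\phi_a}$ with eigenvalue $\lambda a^{-n}$. Before using this I would dispose of the degenerate cases: if $f\equiv0$ it is already a polynomial, and otherwise $\lambda\neq0$, since $C_{\phi_a}f=0$ would force $f$ to vanish on the nonempty open set $\phi_a(\mathbb{D})$ and hence on all of $\mathbb{D}$ by analyticity.

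The heart of the argument is then a spectral escape. By hypothesis $f^{(n)}\in H^2$ for all (large) $n$, so whenever $f^{(n)}\neq0$ its eigenvalue $\lambda a^{-n}$ must lie in $\sigma(C_{\phi_a})=\{\mu\in\mathbb{C}:|\mu|\leq a^{-1/2}\}$. But $0<a<1$ and $\lambda\neq0$ force $|\lambda|a^{-n}\to\infty$, so there is an $N$ with $|\lambda|a^{-n}>a^{-1/2}$ for all $n\geq N$; for such $n$ the putative eigenvalue lies outside the spectrum, which is impossible unless $f^{(n)}\equiv0$. Hence $f^{(N)}\equiv0$, and integrating back $N$ times shows $f$ is a polynomial of degree at most $N-1$.

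The only step requiring any care is this spectral escape, which rests on the boundedness of $\sigma(C_{\phi_a})$ by $a^{-1/2}$ recorded at the start of this section (and due to \cite[Thm. 3.1]{Kamowitz}); everything else is elementary differentiation and induction, so I anticipate no genuine difficulty.
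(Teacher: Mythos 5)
Your proposal is correct and follows essentially the same route as the paper: differentiate the eigenvalue equation (using $\phi_a'\equiv a$) to get $C_{\phi_a}f'=\frac{\lambda}{a}f'$, iterate to obtain $C_{\phi_a}f^{(n)}=\lambda a^{-n}f^{(n)}$, and then conclude $f^{(N)}\equiv 0$ because the eigenvalues $\lambda a^{-n}$ eventually escape the compact spectrum $\{\mu:|\mu|\leq a^{-1/2}\}$. Your explicit disposal of the case $\lambda=0$ (via injectivity of $C_{\phi_a}$ on holomorphic functions) is a small point the paper's proof leaves implicit, but it does not change the argument.
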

\begin{proof}
	Derivating the relation $f(az+1-a)=\lambda f(z)$ with respect to $z$ implies $f'(az+1-a)=\frac{\lambda}{a}f'(z)$ and hence $C_{\phi_a}f'=\frac{\lambda}{a}f'$. It follows that $C_{\phi_a}f^{(n)}=\frac{\lambda}{a^n}f^{(n)}$. Now additionally suppose $f^{(n)}\in H^2$ for all $n\in \mathbb{N}$. Since $\lambda/a^n\to\infty$ as $n\to\infty$ and the spectrum of $C_{\phi_a}$ is a compact set, we must have $f^{(N)}\equiv 0$ for some $N\in\mathbb{N}$. Therefore $f$ is a polynomial. 
	\end{proof}

We have seen that the behaviour at the boundary point $1$ of eigenvectors plays an important role in our study. We next consider their radial limits at $1$. Recall that $f^*(1):=\lim_{r\to1^{-}}f(r)$ if it exists. For each $w\in\mathbb{D}$ define the \emph{orbit} of $w$ under $\phi_a$ by Orb$(w):=(\phi_{a^n}(w))_{n\in\mathbb{N}}$. It is clear that all orbits converge to $1$. Define the limit of $f$ along Orb$(w)$ as $w$-$\lim f=\lim_{n\to\infty}f(\phi_{a^n}(w))$. If $f^*(1)$ exists then $f^*(1)=w$-$\lim f$ for each $w\in(-1,1)$. The next result shows how the location of eigenvalues determines the radial limits of the corresponding eigenvectors. We recall that $\{\lambda\in\mathbb{C}:0<|\lambda|<a^{-1/2}\}$ are all eigenvalues of $C_{\phi_a}$ where $0<a<1$.

\begin{thm}\label{Boundary behaviour of eigenvectors}
	Let $f$ be an eigenvector for $C_{\phi_a}$ with eigenvalue $\lambda\in\mathbb{C}$. For $|\lambda|<1$ we have $f^*(1)=0$.  For $|\lambda|>1$ and $w\in(-1,1)$ we get
	\begin{equation*}
	w\text{-}\lim f=
	\begin{cases}
	0 & \mathrm{if} \ \ f(w)=0 \\
	\infty & \mathrm{if} \ \ f(w)\neq 0 \\
\end{cases}  
\end{equation*}
If $|\lambda|=1$, then $f^*(1)$ exists if and only if $f$ is a constant. 
\end{thm}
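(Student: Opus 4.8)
The backbone of all three cases is the iterated functional equation. Since $C_{\phi_a}^n f = \lambda^n f$ and $f$ extends analytically to $\mathbb{H}$ by Proposition \ref{Eigenvector domain}, I would first record that
\[
f(\phi_{a^n}(w)) = \lambda^n f(w), \qquad \phi_{a^n}(w) = 1 - a^n(1-w),
\]
for every $w \in \mathbb{H}$ and $n \in \mathbb{N}$, noting $\lambda \neq 0$ (an eigenvector with eigenvalue $0$ would vanish on the open set $\phi_a(\mathbb{D})$, hence identically). Since $\phi_{a^n}(w) \to 1$, this immediately disposes of the case $|\lambda| > 1$: for $w \in (-1,1)$ the quantity $|\lambda^n f(w)| = |\lambda|^n |f(w)|$ either stays $0$ (when $f(w)=0$) or tends to $\infty$ (when $f(w) \neq 0$), which is exactly the stated dichotomy for $w\text{-}\lim f$.

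For $|\lambda| < 1$ the orbit relation only controls $f$ along the sequence $\phi_{a^n}(w)$, whereas $f^*(1)$ is a limit over the whole radius, so the plan is to upgrade from the sequence to the full radial limit by a fundamental-domain argument. Fix $r_0 \in (0,1)$ and set $J = [r_0, \phi_a(r_0)] \subset (0,1)$, a compact interval on which $f$ is bounded, say by $M$. Because $\phi_a$ is an increasing contraction of $(-1,1)$ toward its fixed point $1$, the translates $\phi_{a^n}(J) = [\phi_{a^n}(r_0), \phi_{a^{n+1}}(r_0)]$ tile $[r_0,1)$ as $\phi_{a^n}(r_0) = 1 - a^n(1-r_0) \nearrow 1$; hence every $r \in [r_0,1)$ can be written $r = \phi_{a^n}(u)$ with $u \in J$ and $n = n(r) \to \infty$ as $r \to 1^-$. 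Then $|f(r)| = |\lambda|^n |f(u)| \leq |\lambda|^n M \to 0$, which proves $f^*(1) = 0$ (existence and value simultaneously).

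For $|\lambda| = 1$ the forward implication is trivial, since constants are eigenvectors with $\lambda = 1$ and have a radial limit at $1$. For the converse I would assume $f^*(1) = \ell$ exists. Since the orbit points $\phi_{a^n}(w)$ lie on the radius $(0,1)$ and converge to $1$, one has $w\text{-}\lim f = \lim_n \lambda^n f(w) = \ell$ for every $w \in (-1,1)$. As $f$ is a non-zero holomorphic function its zeros are isolated, so there is some $w$ with $f(w) \neq 0$; for that $w$ the sequence $\lambda^n$ converges to $\ell/f(w)$, and a unimodular $\lambda$ has convergent powers only when $\lambda = 1$. Thus $\lambda = 1$, so $f \circ \phi_a = f$ and $f(w) = f(\phi_{a^n}(w)) \to \ell$ for every $w \in (-1,1)$. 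Hence $f \equiv \ell$ on $(-1,1)$, and the identity theorem forces $f$ to be the constant $\ell$.

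I expect the main obstacle to be the $|\lambda| < 1$ case: passing from the orbit limit to the genuine radial limit $f^*(1)$ requires the tiling of $[r_0,1)$ by the translates $\phi_{a^n}(J)$ of a fundamental interval, together with the uniform bound of $f$ on the compact $J$. The remaining two cases reduce quickly to the functional equation and, for $|\lambda|=1$, to the elementary fact that the only unimodular number with a convergent sequence of powers is $1$.
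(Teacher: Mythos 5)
Your proposal is correct and matches the paper's proof in all essentials: the same tiling of $[r_0,1)$ by the intervals $\phi_{a^n}([r_0,\phi_a(r_0)])$ with a uniform bound on the fundamental interval for $|\lambda|<1$, the same direct use of $f(\phi_{a^n}(w))=\lambda^n f(w)$ for $|\lambda|>1$, and the same orbit-limit argument for $|\lambda|=1$. The only cosmetic difference is that you handle $|\lambda|=1$ directly (convergence of $\lambda^n$ forces $\lambda=1$, then constancy on $(-1,1)$ plus the identity theorem), whereas the paper argues by contrapositive in two subcases $\lambda\neq 1$ and $\lambda=1$.
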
 
\begin{proof} We first consider the case $|\lambda|<1$. Fix a point $r\in(-1,1)$ and consider the sequence of intervals 
	\[
	I_n:=[\phi_{a^n}(r),\phi_{a^{n+1}}(r)]
	 \]
	 for $n\geq 0$ with $I_0$ understood to denote $[r,\phi_a(r)]$. Then the interval 
	 $[r,1)=\bigcup_{n=0}^\infty I_n$ and $\phi_a(I_n)=I_{n+1}$. Let $C=\sup_{z\in I_0}|f(z)|$. Given $\epsilon>0$ small, there exists $N\in\mathbb{N}$ such that $|\lambda|^N<\epsilon$. Then for all $z\in[\phi_{a^N}(r),1)$ we have $z\in I_M$ for some $M\geq N$ and there exists $z_0\in I_0$ such that $\phi_{a^M}(z_0)=z$. So for all $z\in[\phi_{a^N}(r),1)$ we have
	 \[
	 |f(z)|=|f(\phi_{a^M}(z_0))|=|(C_{\phi_{a^M}}f)(z_0)|=|\lambda|^M|f(z_0)|\leq C\epsilon.
	 \]  
	 Since $\epsilon$ was arbitrary we get $f^*(1)=0$ if $|\lambda|<1$. Now suppose that  $|\lambda|>1$ and consider the equation 
	\begin{equation}\label{f(a^n)}
	f(\phi_{a^n}(w))=\lambda^n f(w)
	\end{equation}
	for $w\in(-1,1)$. Letting $n\to\infty$ in \eqref{f(a^n)} shows that $w\text{-}\lim f=0$ if $f(w)=0$ and $w\text{-}\lim f=\infty$ otherwise. If $\lambda=e^{i\theta}\neq 1$, then $\lambda^n$ does not converge and letting $n\to\infty$ in \eqref{f(a^n)} shows that $f^*(1)$ cannot exist. For $\lambda=1$ and $f$ non-constant, choose $w,s\in (-1,1)$ such that $f(w)\neq f(s)$ and hence \eqref{f(a^n)} implies 
	\[
	\lim_{n\to\infty}f(\phi_{a^n}(w))=f(w)\neq f(s)= \lim_{n\to\infty}f(\phi_{a^n}(s))
	\]
	so $f^*(1)$ does not exist. 
\end{proof}

Finally we prove that the $f_s$ are the only common eigenvectors for $C_{\phi_a}$ with $a\in(0,1)$. For any non-zero $f\in H^2$ we define the subset $A_f\subset(0,1)$ by
\[
A_f=\{a\in(0,1): f \ \mathrm{is} \ \mathrm{a} \ C_{\phi_a}\mathrm{eigenvector}\}.
\] 
Since $f_s$ is a common eigenvector for the $\{C_{\phi_a}:a\in(0,1)\}$ for all $\mathrm{Re}(s)>-1/2$ (see \eqref{Principal eigenvector}), it follows that $A_{f_s}=(0,1)$ . The next result shows that $A_f=(0,1)$ precisely when $f$ is a scalar multiple of one of these $f_s$. 

\begin{thm}\label{A_f Thm.}Let $f\in H^2$ not be a scalar multiple of $f_s$ for any $s\in\mathbb{C}$. Then either $A_f$ is empty or $A_f=(c^n)_{n\in\mathbb{N}}$ for some $c\in(0,1)$.
\end{thm}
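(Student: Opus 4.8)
The plan is to show that $A_f$, together with the reciprocals of its elements and the number $1$, forms a multiplicative subgroup of $(0,\infty)$, and then to invoke the classical dichotomy for subgroups of the line: such a subgroup is either cyclic or dense. The cyclic alternative will give exactly $A_f=(c^n)_{n\in\mathbb{N}}$ (or $A_f=\emptyset$), while the dense alternative will be shown to force $f$ to be a scalar multiple of some $f_s$, contradicting the hypothesis.

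First I would record two closure properties. If $a,b\in A_f$ with $C_{\phi_a}f=\lambda f$ and $C_{\phi_b}f=\mu f$, then $\lambda,\mu\neq 0$, since each $C_{\phi_a}$ is injective: $g\circ\phi_a\equiv 0$ forces $g\equiv 0$ on the open set $\phi_a(\mathbb{D})$ and hence on $\mathbb{D}$ by the identity theorem. The commuting relation $C_{\phi_a}C_{\phi_b}=C_{\phi_{ab}}$ gives $C_{\phi_{ab}}f=\lambda\mu f$, so $A_f$ is closed under products. For the crucial division property, take $a<b$ in $A_f$ and set $c=a/b\in(0,1)$. Since $\phi_{a/b}\circ\phi_b=\phi_a$, we have $C_{\phi_a}=C_{\phi_b}C_{\phi_c}$, so $C_{\phi_b}(C_{\phi_c}f)=C_{\phi_a}f=\lambda f=C_{\phi_b}\bigl(\tfrac{\lambda}{\mu}f\bigr)$; injectivity of $C_{\phi_b}$ then yields $C_{\phi_c}f=\tfrac{\lambda}{\mu}f$, i.e. $c\in A_f$. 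A short case check shows that $G:=\{1\}\cup A_f\cup\{a^{-1}:a\in A_f\}$ is then closed under products and inverses, so $G$ is a subgroup of $((0,\infty),\cdot)$ with $A_f=G\cap(0,1)$.

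Applying the structure theorem to $\log G\le(\mathbb{R},+)$, it is either $\gamma\mathbb{Z}$ or dense. If $\log G=\gamma\mathbb{Z}$ with $\gamma>0$, then $G=\{c^n:n\in\mathbb{Z}\}$ for $c=e^{-\gamma}\in(0,1)$ and $A_f=G\cap(0,1)=(c^n)_{n\in\mathbb{N}}$; if $\gamma=0$ then $A_f=\emptyset$. To exclude density, fix any $a_0\in A_f$ and use Proposition \ref{Eigenvector domain} to continue $f$ analytically to $\mathbb{H}$, so that $g(u):=f(1-u)$ is analytic on the half-plane $\{\mathrm{Re}\,u>0\}$ and satisfies $g(au)=\lambda(a)g(u)$ there for all $a\in G$, with $\lambda:G\to\mathbb{C}^{*}$ a homomorphism. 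Choosing $u_0>0$ with $g(u_0)\neq 0$ (possible as $g\not\equiv 0$), the function $\Lambda(x):=g(xu_0)/g(u_0)$ is continuous on $(0,\infty)$ and equals $\lambda$ on the dense subgroup $G$. A two-variable continuity argument promotes multiplicativity from $G$ to all of $(0,\infty)$, and a nonvanishing continuous multiplicative function on $(0,\infty)$ is $\Lambda(x)=x^{s}$ for some $s\in\mathbb{C}$. Hence $g(v)=g(u_0)u_0^{-s}v^{s}$ on $(0,\infty)$, and by the identity theorem $g(u)=Cu^{s}$ on the whole half-plane, so $f(z)=C(1-z)^{s}=Cf_s(z)$, contradicting the hypothesis. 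Thus $\log G$ is not dense, which completes the proof.

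I expect the density step to be the main obstacle. One must ensure that $g$ does not vanish identically on the positive axis, so that $\Lambda$ is both well defined and nonvanishing (the latter because a continuous multiplicative function taking the value $0$ is identically $0$, forcing $g\equiv 0$); that continuity genuinely upgrades the homomorphism property from the dense set $G$ to all of $(0,\infty)$; and that the branch of $u^{s}$ on the simply connected half-plane is the principal one, so that it agrees with the branch defining $f_s(z)=(1-z)^{s}$.
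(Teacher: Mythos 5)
Your proof is correct, and it takes a genuinely different route from the paper's at the decisive step. You share with the paper the algebraic closure lemma (products, and quotients $a/b\in A_f$ for $a<b$ in $A_f$, via injectivity of $C_{\phi_b}$), but from there the arguments diverge. The paper works topologically inside $(0,1)$: it first proves $A_f$ is closed (eigenvalues converge along convergent sequences in $A_f$, by evaluating at a point of $\Omega=\{f\neq 0\}$) and has empty interior --- the latter by differentiating the relation $f(bz+1-b)=\lambda(b)f(z)$ with respect to both $b$ and $z$ on a hypothetical interval inside $A_f$, equating to get $f'/f=-s(b)/(1-z)$ with $s(b)=b\lambda'(b)/\lambda(b)$, and integrating to force $f=Kf_{s(b)}$ --- and then runs a hands-on combinatorial gap argument: pick $a<b$ in $A_f$ with $(a,b)\cap A_f=\emptyset$ (possible since the complement is open and dense), set $c=a/b$, show $a=c^{N+1}$ and $b=c^N$, and rule out any further elements. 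You instead package the closure properties into the group $G=\{1\}\cup A_f\cup A_f^{-1}$ and invoke the cyclic-or-dense dichotomy for subgroups of $(\mathbb{R},+)$: the cyclic case hands you $A_f=G\cap(0,1)=(c^n)_{n\geq 1}$ with no combinatorics at all, while in the dense case the continuous function $\Lambda(x)=g(xu_0)/g(u_0)$ extending the eigenvalue character is upgraded by density to a continuous nonvanishing character of $(0,\infty)$, hence $\Lambda(x)=x^s$, and the identity theorem gives $f=Cf_s$, contradicting the hypothesis. What each buys: your version never needs closedness of $A_f$ nor differentiability of the eigenvalue function (continuity of $g$ along rays is free from analyticity), and it exposes the real mechanism --- $A_f$ generates a multiplicative group, and density forces $f$ to be a character in disguise; the paper's version is more elementary in that it avoids the structure theorem for subgroups of $\mathbb{R}$ and the classification of continuous characters, at the cost of the extra closedness lemma and the explicit gap bookkeeping. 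The routine details you flag --- extension of the eigen-relation from $\mathbb{D}$ to $\mathbb{H}$ (valid since $\phi_a(\mathbb{H})\subset\mathbb{H}$ and both sides are analytic there, using Proposition \ref{Eigenvector domain}), nonvanishing of $\Lambda$ from $\Lambda(1)=1$ plus multiplicativity, and the principal-branch consistency of $u^s$ on the simply connected right half-plane with $f_s(z)=(1-z)^s$ --- all check out as stated.
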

\begin{proof} We first prove that $A_f$ is closed in $(0,1)$ and has empty interior. Let 
	\[\Omega=\{z\in\mathbb{D}:f(z)\neq 0\}\]
	be the open subset of $\mathbb{D}$ where $f$ is non-vanishing. Suppose $A_f$ in non-empty and there is a sequence $(b_n)_{n\in\mathbb{N}}$ in $A_f$ that converges to some $b\in (0,1)$ with $C_{\phi_{b_n}}f=\lambda_n f$. Then for $z\in\Omega$ we have 
	\[
	\lambda:=\lim_{n\to\infty}\lambda_n=\lim_{n\to\infty}\frac{C_{\phi_{b_n}}f(z)}{f(z)}=\lim_{n\to\infty}\frac{f(b_nz+1-b_n)}{f(z)}=\frac{C_{\phi_{b}}f(z)}{f(z)}
	\]
	which exists and hence $b\in A_f$. So $A_f$ is closed in $(0,1)$. Now suppose $A_f$ contains an open interval $(s,t)$. Then let $\lambda:(s,t)\to\mathbb{C}$ be the function defined by
	\begin{equation}\label{lambda(b)}
	f(bz+1-b)=C_{\phi_b}f(z)=\lambda(b)f(z).
	\end{equation}
	for $b\in(s,t)$. Then fixing $z\in\Omega$ in \eqref{lambda(b)} shows that $\lambda$ is continuously differentiable on $(s,t)$. Now differentiating \eqref{lambda(b)} with respect to $b$ while fixing $z$ gives
	\[
	f'(bz+1-b)=\frac{\lambda'(b)f(z)}{z-1}
	\]
	and doing the same with respect to $z$ while fixing $b$ gives
	\[
	f'(bz+1-b)=\frac{\lambda(b)f'(z)}{b}.
	\]
	Therefore $\frac{f'(z)}{f(z)}=\frac{-s(b)}{1-z}$ where $s(b)=\frac{b\lambda'(b)}{\lambda(b)}$ for all $z\in\Omega$ and $b\in(s,t)$. This implies $f\neq 0$ in $\mathbb{D}$ otherwise $f'/f$ would have a pole in $\mathbb{D}$. So $f$ has a holomorphic logarithm $g$ with $e^g=f$ in $\mathbb{D}$. Derivating the equation $fe^{-g}=1$ gives $f'e^{-g}=g'fe^{-g}=g'$ or $g'=f'/f$. Therefore $g(z)=s(b)\log(1-z)+C$ for a constant $C$ and hence $f(z)=K(1-z)^{s(b)}=Kf_{s(b)}$ for some constant $K$. This contradicts our assumption and hence $A_f$ has empty interior. 
	
	We now prove that $A_f=(c^n)_{n\in\mathbb{N}}$ for some $c\in(0,1)$ if $A_f\neq\emptyset$. Note that $a\in A_f$ implies $(a^n)_{n\in\mathbb{N}}\in A_f$ by \eqref{Compositional iterates}. If $a,b\in A_f$ with $C_{\phi_a}f=\lambda_af$ and $C_{\phi_b}f=\lambda_bf$, then $C_{\phi_{ab}}f=\lambda_a\lambda_bf$ and if $a<b$ then
	\[
	C_{\phi_{a/b}}f=\frac{1}{\lambda_b}C_{\phi_{a/b}}C_{\phi_b}f=\frac{1}{\lambda_b}C_{\phi_a}f=\frac{\lambda_a}{\lambda_b}f
	\] 
	where $\lambda_b$ is non-zero since $C_{\phi_b}$ is injective. So $a,b\in A_f$ implies $ab\in A_f$ and if also $a<b$ then $a/b\in A_f$. Now since the complement of $A_f$ in $(0,1)$ is open and dense, there exist $a,b\in A_f$ such that $(a,b)\cap A_f=\emptyset$. We define $c:=a/b\in A_f$ and hence $(c^n)_{n\in\mathbb{N}}\subset A_f$. Since $c>a$ and $c\notin(a,b)$ implies $c\geq b$. We now claim that $a=c^{N+1}$ and $b=c^N$ for some $N\in\mathbb{N}$. Otherwise there exists $n\in \mathbb{N}$ such that $c^{n+1}<a<b<c^n$ since $(a,b)\cap A_f=\emptyset$. But $b<c^n$ implies $a=cb<c^{n+1}$ which is a contradiction. So $a=c^{N+1}$ and $b=c^N$ for some $N\in\mathbb{N}$. This implies that for all $n\in\mathbb{N}$
	\begin{equation}\label{c_n cap A_f empty}
	(c^{n+1},c^n)\bigcap A_f=\emptyset
	\end{equation}
	otherwise if $d\in(c^{n+1},c^n)\bigcap A_f$ for some $n$ then $c^{N-n}d\in(a,b)\bigcap A_f$. The only case that remains is if $d>c$ and $d\in A_f$. But this is also not possible since $c>dc>c^2$ contradicts \eqref{c_n cap A_f empty} with $n=1$. Therefore $A_f=(c^n)_{n\in\mathbb{N}}$.
\end{proof}

\emph{Example}. Let $h=f_s+f_{s+\frac{2\pi i}{\log{a}}}$ for some $a\in(0,1)$ and $\mathrm{Re}(s)>-1/2$. Then $C_{\phi_a}h=a^sf_s+a^{s+\frac{2\pi i}{\log{a}}}f_{s+\frac{2\pi i}{\log{a}}}=a^sh$ because $a^{\frac{2\pi i}{\log{a}}}=1$ and hence $a\in A_h$. We will show that $A_h=(a^n)_{n\in\mathbb{N}}$. If some other $b\in A_h$ then \[C_{\phi_b}h=b^sf_s+b^{s+\frac{2\pi i}{\log{a}}}f_{s+\frac{2\pi i}{\log{a}}}=\lambda h\] for some $\lambda\in\mathbb{C}$ if and only if $b^{\frac{2\pi i}{\log{a}}}=e^{2\pi i\frac{\log{b}}{\log{a}}}=1$. So $\log{b}=n\log{a}=\log{a^n}$ for some $n\in\mathbb{Z}$. Hence $b=a^n$ for $n\geq 1$ since $b\in(0,1)$. Therefore $A_h=(a^n)_{n\in\mathbb{N}}$. \qed \smallskip

\section*{Acknowledgement}
This work constitutes a part of the doctoral thesis of the first author, which is supervised by the second author. The second author is partially supported by a FAPESP grant (17/09333-3).
	
\bibliographystyle{amsplain}

\end{document}